\newtheorem{theorem}{Theorem}[section]
\newtheorem{lemma}[theorem]{Lemma}
\newtheorem{cor}[theorem]{Corollary}
\newtheorem{example}{Example}
\theoremstyle{definition}
\numberwithin{equation}{section} 
\def\qed{\hfill$\Box$\vspace{12pt}}
\long\def\delete#1{}
\tikzstyle{vertex}=[circle, draw, inner sep=0pt, minimum size=6pt]
\tikzstyle{directed}=[postaction={decorate,
\begin{document}
\title {Pair state transfer in tensor product and double cover}

\author{Ming Jiang$^{a,b}$,~Xiaogang Liu$^{a,b,c,}$\thanks{Supported by the National Natural Science Foundation of China (No. 12371358) and the Guangdong Basic and Applied
Basic Research Foundation (No. 2023A1515010986).}~$^,$\thanks{ Corresponding author. Email addresses: mjiang@mail.nwpu.edu.cn, xiaogliu@nwpu.edu.cn,
wj66@mail.nwpu.edu.cn}~,~Jing Wang$^{a,b}$
\\[2mm]
{\small $^a$School of Mathematics and Statistics,}\\[-0.8ex]
{\small Northwestern Polytechnical University, Xi'an, Shaanxi 710072, P.R.~China}\\
{\small $^b$Research \& Development Institute of Northwestern Polytechnical University in Shenzhen,}\\[-0.8ex]
{\small Shenzhen, Guandong 518063, P.R. China}\\
{\small $^c$Xi'an-Budapest Joint Research Center for Combinatorics,}\\[-0.8ex]
{\small Northwestern Polytechnical University, Xi'an, Shaanxi 710129, P.R. China}\\[2mm]
{\small \emph{Dedicated to Professor Eddie Cheng on the occasion of his 60th birthday.}}
}
\date{}

\openup 0.5\jot
\maketitle

\begin{abstract}
Quantum state transfer, first introduced by Bose in 2003, is an important physical phenomenon in quantum networks, which plays a vital role in quantum communication and quantum computing. In 2004, Christandl et al. proposed the concept of perfect state transfer on graphs by modeling the quantum network using graphs, and unveiled the feasibility of applying graph theory to quantum state transfer. In 2018, Chen and Godsil proposed the definition of Laplacian perfect pair state transfer on graphs, which is a brilliant generalization of perfect state transfer. In this paper, we investigate the existence of Laplacian perfect pair state transfer in tensor product and double cover of two regular graphs, respectively, and reveal fundamental connections between perfect state transfer and Laplacian perfect pair state transfer. We give necessary and sufficient conditions for the tensor product of two regular graphs to admit Laplacian perfect pair state transfer, where one of the two regular graphs admits perfect state transfer or Laplacian perfect pair state transfer. Additionally, we characterize the existence of Laplacian perfect pair state transfer in the double cover of two regular graphs. By our results, a variety of families of graphs admitting Laplacian perfect pair state transfer can be constructed.

\smallskip

\emph{Keywords:} Perfect state transfer; Laplacian perfect pair state transfer; tensor product;  double cover

\emph{Mathematics Subject Classification (2010):} 05C50, 81P68

\end{abstract}



\section{Introduction}
Quantum state transfer in quantum networks, first introduced by Bose in 2003 \cite{BOSE2}, is a very important research content for quantum communication protocols. It plays a vital role in supporting quantum communication \cite{chris1, Yin}, enabling distributed quantum computing architectures \cite{Main}, facilitating large-scale quantum network construction \cite{ZHU}, optimizing quantum algorithms \cite{Xu, Banks}, and so on. Achieving quantum state transfer with high fidelity is a core research priority in quantum information processing. In 2004, Christandl et al. \cite{chris1} innovatively proposed modeling the quantum network by graphs and demonstrated the evolution of a time-dependent quantum system via continuous quantum walk on graphs, which unveiled the feasibility of applying graph theory to quantum state transfer.

Let $G$ be a simple undirected graph with an associated Hermitian matrix $M_G$. The \emph{transition matrix} of continuous quantum walk on $G$ with respect to $M_G$ is defined as
\begin{equation*}
U_{M_{G}}(t)=\exp(-\mathrm{i} t M_{G}),~ t \in \mathbb{R},~\mathrm{i}=\sqrt{-1},
\end{equation*}
which simulates the evolution of the quantum system modeled by the graph $G$ over time. In different quantum spin models, the Hermitian matrix $M_G$ is usually taken as different matrices of $G$, such as the \emph{adjacency matrix} $A_G$, the \emph{Laplacian matrix} $L_G$, the \emph{signless Laplacian matrix} $Q_G$, etc. Let $|G|$ denote the number of vertices of $G$. The standard basis vector $\mathbf{e}_{u}^{|G|}\in \mathbb{C}^{\left|G\right|}$ indexed by the vertex $u$ in $G$ is called the \emph{vertex state} of $u$, which can be written as $\mathbf{e}_{u}$ whenever no confusion can arise. The graph $G$ is said to have \emph{perfect state transfer} (PST for short) between $u$ and $v$ at time $\tau$ if there exists a phase factor $\lambda\in \mathbb{C}$ with $\left|\lambda\right|=1$ satisfying
\begin{align*}
U_{A_{G}}(\tau)\mathbf{e}_{u}=\lambda \mathbf{e}_{v}.
\end{align*}
The existence of PST has been investigated in many families of graphs, including integral circulant graphs \cite{Pet}, cubelike graphs \cite{Cheung}, distance regular graphs \cite{distance} and Cayley graphs \cite{CaoCL20, CaoF21, Tan19,Tm19}. Analogously, replacing $A_{G}$ by $L_{G}$ in the definition of PST, we obtain the definition of \emph{Laplacian perfect state transfer} (LPST for short). In 2009, Bose et al. \cite{BOSE1} first gave the definition of LPST, and proved that the complete graph $K_n$ with one edge deleted admits LPST when $n$ is a multiple of $4$. For more results on LPST, please refer to \cite{Ack, Coutinho11,LiLZZ21, Alvi, LiuD, Kirk, LiY2, WJ}.

Although there are many results on state transfer, graphs admitting (Laplacian) perfect state transfer are still very rare. In 2018, Chen and Godsil further demonstrated quantum state transfer between two distinct pairs of vertices in graphs \cite{QCh18, ChG19}, which is a brilliant generalization of state transfer. Let $\{a, b\}$ be a pair of distinct vertices of $G$. The state of $\{a, b\}$ is formed by $\mathbf{e}_{a}-\mathbf{e}_{b}$, called the \emph{pair state} of $\{a, b\}$. In particular, if $a$ is adjacent to $b$, then $\mathbf{e}_{a}-\mathbf{e}_{b}$ is also called the \emph{edge state} of $\{a, b\}$. Given two distinct pairs of vertices $\{a, b\}$ and $\{c, d\}$, if there exists a time $\tau$ and a phase factor $\chi\in \mathbb{C}$ with $\left|\chi\right|=1$ such that
\begin{align*}
U_{M_{G}}(\tau)(\mathbf e_{c}-\mathbf e_{d})=\chi(\mathbf e_{a}-\mathbf e_{b}),
\end{align*}
then we say that $G$ has \emph{perfect pair state transfer} (Pair-PST for short) between $\mathbf e_{a}-\mathbf e_{b}$ and $\mathbf e_{c}-\mathbf e_{d}$ at time $\tau$ with respect to $M_{G}$, which is equivalent to
\begin{align*}
\left|\frac{1}{2}(\mathbf e_{c}-\mathbf e_{d})^\top U_{M_{G}}(\tau)(\mathbf e_{a}-\mathbf e_{b})\right|^{2}=1,
\end{align*}
where $\ast^\top$ denotes the transpose of $\ast$. In particular, if $\mathbf e_{a}-\mathbf e_{b}$ and $\mathbf e_{c}-\mathbf e_{d}$ are edge states, then Pair-PST is also called Edge-PST. Furthermore, $\mathbf e_{a}-\mathbf e_{b}$ is said to be \emph{periodic} with respect to $M_{G}$ at time $\tau$ if
\begin{align*}
\left|\frac{1}{2}(\mathbf e_{a}-\mathbf e_{b})^\top U_{M_{G}}(\tau)(\mathbf e_{a}-\mathbf e_{b})\right|^{2}=1.
\end{align*}

In 2018, Chen and Godsil first introduced the concept of \emph{Laplacian perfect pair state transfer} (Pair-LPST for short) by choosing the Hermitian matrix $M_{G}$ as $L_G$, and gave lots of foundational and useful results on Pair-LPST \cite{QCh18, ChG19}. In 2021, Cao investigated the existence of Edge-LPST in cubelike graphs, and proposed methods to obtain some classes of infinite graphs admitting Edge-LPST \cite{CAO}.
In the same year, Luo et al. revealed the relation between PST and Edge-LPST over Cayley graphs and gave necessary and sufficient conditions for Cayley graphs over dihedral groups to have Edge-LPST \cite{LCXC21}.
In 2022, Cao and Wan gave a characterization of abelian Cayley graphs having  Edge-LPST \cite{CAO2}.
In 2023, Wang, Liu and Wang gave sufficient conditions for the vertex corona to have or not have Pair-LPST and they also proposed the definition of \emph{Laplacian pretty good pair state transfer} in \cite {liu2023}.
In 2024, Kim et al. generalized pair state to \emph{s-pair state} of $\mathbf e_{a}+s  \mathbf e_{b}$, where $s\in \mathbb{C}\setminus \left \{  0\right \} $, and developed the theory of \emph{perfect $s$-pair state transfer}, where the Hermitian matrix is taken to be the adjacency, Laplacian or signless Laplacian matrix of the graph \cite{Kim}.
In 2025, Tao and Wang characterized the existence of Edge-LPST on Cayley graphs of semi-dihedral groups \cite{Tao}.
In the same year, Jiang, Liu and Wang investigated the existence of Pair-LPST in Q-graphs and gave sufficient conditions for Q-graphs not having Pair-LPST \cite{JM}.

In this paper, we consider the existence of  Pair-LPST  in the tensor product and the double cover of two regular graphs, respectively. In Section \ref{Tensor product}, we give necessary and sufficient conditions for the tensor product of two regular graphs $G$ and $H$ to have Pair-LPST when $H$ admits Pair-LPST or PST, respectively. As examples, we construct several families of tensor products having Pair-LPST by paths, cycles and complete graphs. In Section \ref{Double cover}, we give necessary and sufficient conditions for the double cover of two regular graphs to have  Pair-LPST. We also prove that the double cover of two complete graphs of the same order admits Pair-LPST as an example.

\section{Preliminaries}\label{Sec:pre}

Let $G$ be a graph with Laplacian matrix $L_G$. Denote the set of all distinct eigenvalues of $L_G$ by $\mathrm {Spec}_{L}(G)$. Note that $L_G$ is real symmetric and diagonalizable. It follows that $L_G$ has the spectral decomposition
\begin{equation}
\label{spect1}
L_G=\sum_{\theta_r\in \mathrm {Spec}_{L}(G)}\theta_rF_{\theta_r},
\end{equation}
where $F_{\theta_r}$ is the \emph{eigenprojector} corresponding to $\theta_r\in \mathrm {Spec}_{L}(G)$. Notice the orthogonality and idempotence of eigenprojectors. By (\ref{spect1}), we have
\begin{equation}\label{LSpecDec2}
U_{L_G}(t)=\sum_{k\geq 0}\dfrac{(-\mathrm{i})^{k}L_G^{k}t^{k}}{k!}=\sum_{\theta_r\in \mathrm {Spec}_{L}(G)}\exp(-\mathrm{i}t\theta_{r})F_{\theta_r}.
\end{equation}
Given a pair state $\mathbf e_{a}-\mathbf e_{b}$ of $G$, the \emph{Laplacian eigenvalue support} of $\mathbf{e}_{a}-\mathbf{e}_{b}$ is defined as
$$
\mathrm {supp}_{L_G}(\mathbf{e}_{a}-\mathbf{e}_{b})=\left\{ \theta\in \mathrm {Spec}_{L}(G) \mid F_\theta\mathbf(\mathbf{e}_{a}-\mathbf{e}_{b})\neq \mathbf{0} \right\}.
$$
Two distinct pair states $\mathbf e_{a}-\mathbf e_{b}$ and $\mathbf e_{c}-\mathbf e_{d}$ are said to be \emph{Laplacian strongly cospectral} if for any eigenvalue $\theta \in \mathrm {Spec}_{L}(G)$,
$$
F_\theta\mathbf(\mathbf{e}_{a}-\mathbf{e}_{b})=\pm F_\theta\mathbf(\mathbf{e}_{c}-\mathbf{e}_{d}).
$$

Define
$$
\Lambda^{+}_{ab,cd}=\left\{\theta\in \mathrm {supp}_{L_G}(\mathbf{e}_{a}-\mathbf{e}_{b}) \mid F_\theta\mathbf(\mathbf{e}_{a}-\mathbf{e}_{b})= +F_\theta\mathbf(\mathbf{e}_{c}-\mathbf{e}_{d})\right\},
$$
and
$$
\Lambda^{-}_{ab,cd}=\left\{\theta\in \mathrm {supp}_{L_G}(\mathbf{e}_{a}-\mathbf{e}_{b}) \mid F_\theta\mathbf(\mathbf{e}_{a}-\mathbf{e}_{b})= -F_\theta\mathbf(\mathbf{e}_{c}-\mathbf{e}_{d})\right\}.
$$
Then $\mathbf e_{a}-\mathbf e_{b}$ and $\mathbf e_{c}-\mathbf e_{d}$ are Laplacian strongly cospectral if and only if
$$
\mathrm{{supp}}_{L_G}(\mathbf{e}_{a}-\mathbf{e}_{b})=\Lambda^{+}_{ab,cd}\cup \Lambda^{-}_{ab,cd}.
$$

In particular, we say that $\mathbf e_{a}-\mathbf e_{b}$ is Laplacian strongly cospectral with itself. In this case, $\Lambda^{-}_{ab,ab}=\emptyset$ and $\mathrm{{supp}}_{L_G}(\mathbf{e}_{a}-\mathbf{e}_{b})=\Lambda^{+}_{ab,ab}$ .

Let $A_G$ be the adjacency matrix of $G$, and denote the set of all distinct eigenvalues of $A_G$ by $\mathrm {Spec}_{A}(G)$. Similar to \eqref{spect1} and \eqref{LSpecDec2}, $A_G$ has the spectral decomposition
\begin{equation*}
A_G=\sum_{\mu_r\in \mathrm {Spec}_{A}(G)}\mu_rE_{\mu_r},
\end{equation*}
where  $E_{\mu_r}$ is the eigenprojector corresponding to $\mu_r\in \mathrm {Spec}_{A}(G)$, and
\begin{equation}
\label{ASpecDec2}
U_{A_G}(t)=\sum_{k\geq 0}\dfrac{(-\mathrm{i})^{k}A_G^{k}t^{k}}{k!}=\sum_{\mu_r\in \mathrm {Spec}_{A}(G)}\exp(-\mathrm{i}t\mu_{r})E_{\mu_r}.
\end{equation}
The \emph{adjacency eigenvalue support} of a vertex state $\mathbf{e}_{u}$ (or a vertex $u$) is defined as
$$
\mathrm{{supp}}_{A_G}(\mathbf{e}_{u})=\left \{ \mu\in \mathrm {Spec}_{A}(G)\mid E_{\mu}\mathbf{e}_{u}\neq \mathbf{0} \right \}.
$$
Two vertex states $\mathbf{e}_{u}$ and $\mathbf{e}_{v}$ (or two vertices $u$ and $v$) are called \emph{adjacency strongly cospectral} if $E_{\mu}\mathbf{e}_{u}=\pm E_{\mu}\mathbf{e}_{v}$ for any $\mu\in \mathrm {Spec}_{A}(G)$.

Define
$$
\Lambda^{+}_{u,v}=\left\{\mu\in \mathrm{{supp}}_{A_G}(\mathbf{e}_{u}) \mid E_{\mu}\mathbf{e}_{u}=+E_{\mu}\mathbf{e}_{v}\right\},
$$
and
$$
\Lambda^{-}_{u,v}=\left\{\mu\in \mathrm{{supp}}_{A_G}(\mathbf{e}_{u}) \mid E_{\mu}\mathbf{e}_{u}=-E_{\mu}\mathbf{e}_{v}\right\}.
$$
Then $\mathbf{e}_{u}$ and $\mathbf{e}_{v}$ are adjacency strongly cospectral if and only if $$
\mathrm{{supp}}_{A_G}(\mathbf{e}_{u})=\Lambda^{+}_{u,v}\cup \Lambda^{-}_{u,v}.
$$

In particular, we say that $\mathbf{e}_{u}$ is adjacency strongly cospectral with itself. In this case, $\Lambda^{-}_{u,u}=\emptyset$ and $\mathrm{{supp}}_{A_G}(\mathbf{e}_{u})=\Lambda^{+}_{u,u}$.

Let $\mathbb{Q}$ be the set of rational numbers. The following two results give necessary and sufficient conditions for a graph to have Pair-LPST and PST, respectively.

\begin{lemma}\emph{(See \cite[Theorem~3.9]{ChG19})}\label{Coutinho}
Let $\mathbf{e}_a-\mathbf{e}_b$ and $\mathbf{e}_c-\mathbf{e}_d$ be two distinct pair states in a graph $G$. Set
$S=\mathrm{supp}_{L_G}(\mathbf{e}_{a}-\mathbf{e}_{b})=\left \{ \theta_{r} \mid  0\le r \le k \right\}$ with $\theta_{0}\in \Lambda^{+}_{ab,cd}$. Then $G$ has Pair-LPST between $\mathbf{e}_a-\mathbf{e}_b$ and $\mathbf{e}_c-\mathbf{e}_d$ if and only if all of
the following hold:
\begin{itemize}
\item[\rm (a)]  $\mathbf{e}_a-\mathbf{e}_b$ and $\mathbf{e}_c-\mathbf{e}_d$ are Laplacian strongly cospectral;
 \item[\rm (b)] The eigenvalues in $S$ are either all integers or all quadratic integers. Moreover, there is a square-free integer $\Delta$ such that each $\theta \in S$ is
    a quadratic integer in $\mathbb{Q}(\sqrt{\Delta})$, and the difference of any two eigenvalues in $S$ is an integer multiple of $\sqrt{\Delta}$. Here, we allow $\Delta=1$ for the case where all eigenvalues in $S$ are integers;
\item[\rm (c)] Let $g=\gcd\left(\left\{\frac{\theta_{0}-\theta_{r}}{\sqrt{\Delta}}\right\}^{k}_{r=0}\right)$. Then
\begin{itemize}
\item[\rm(i)] $\theta_{r}\in\Lambda^{+}_{ab,cd}$ if and only if $\frac{\theta_{0}-\theta_{r}}{g\sqrt{\Delta}}$ is even, and
 \item[\rm(ii)] $\theta_{r}\in\Lambda^{-}_{ab,cd}$ if and only if $\frac{\theta_{0}-\theta_{r}}{g\sqrt{\Delta}}$ is odd.
\end{itemize}
\end{itemize}

 If these conditions hold and Pair-LPST occurs between $\mathbf{e}_a-\mathbf{e}_b$ and $\mathbf{e}_c-\mathbf{e}_d$ at time $\tau$, then the minimum time is
 $\tau_0=\frac{\pi}{g\sqrt{\Delta}}$.
\end{lemma}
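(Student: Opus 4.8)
The plan is to diagonalise the transition operator through the Laplacian spectral decomposition \eqref{LSpecDec2}, turn Pair-LPST into a system of conditions on the phases $\mathrm{e}^{-\mathrm{i}\tau\theta_r}$ for $\theta_r\in S$, and then analyse that system with elementary algebraic number theory. First I would settle the necessity of (a): if $U_{L_G}(\tau)(\mathbf e_a-\mathbf e_b)=\gamma(\mathbf e_c-\mathbf e_d)$ with $|\gamma|=1$, then applying $F_{\theta}$ to both sides gives $F_{\theta}(\mathbf e_a-\mathbf e_b)=\gamma\,\mathrm{e}^{\mathrm{i}\tau\theta}F_{\theta}(\mathbf e_c-\mathbf e_d)$ for every $\theta\in\mathrm{Spec}_L(G)$; since $\mathbf e_a-\mathbf e_b$ and $\mathbf e_c-\mathbf e_d$ are real and each $F_{\theta}$ is a real matrix, for $\theta\in S$ the unimodular scalar $\gamma\,\mathrm{e}^{\mathrm{i}\tau\theta}$ must be real, hence $\pm1$, and for $\theta\notin S$ the left side vanishes so $F_{\theta}(\mathbf e_c-\mathbf e_d)=\mathbf 0$; this is exactly Laplacian strong cospectrality together with equality of supports. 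Conversely, once (a) holds, write $F_{\theta_r}(\mathbf e_a-\mathbf e_b)=\sigma_r F_{\theta_r}(\mathbf e_c-\mathbf e_d)$ with $\sigma_r\in\{\pm1\}$, $\sigma_0=+1$; then Pair-LPST at time $\tau$ (with phase $\gamma=\mathrm{e}^{-\mathrm{i}\tau\theta_0}$, after resumming over $S$) is \emph{equivalent} to the equations $\mathrm{e}^{\mathrm{i}\tau(\theta_0-\theta_r)}=\sigma_r$ for all $\theta_r\in S$. So the theorem reduces to: such a real $\tau$ exists if and only if (b) and (c) hold.

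For the necessity of (b) I would first note that Pair-LPST at $\tau$ makes $\mathbf e_a-\mathbf e_b$ periodic at $2\tau$: strong cospectrality gives $U_{L_G}(\tau)(\mathbf e_c-\mathbf e_d)=\gamma(\mathbf e_a-\mathbf e_b)$, hence $U_{L_G}(2\tau)(\mathbf e_a-\mathbf e_b)=\gamma^2(\mathbf e_a-\mathbf e_b)$, so $\mathrm{e}^{-2\mathrm{i}\tau\theta_r}$ is independent of $r$ and $\tfrac{\tau}{\pi}(\theta_r-\theta_s)\in\mathbb Z$ for all $\theta_r,\theta_s\in S$; in particular every difference $\theta_r-\theta_0$ is a rational multiple of $\alpha:=\theta_1-\theta_0$. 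Since $L_G$ is an integer matrix, each $\theta_r$ is an algebraic integer, and each Galois automorphism $\sigma$ of a splitting field fixes $L_G$, permutes the eigenprojectors (one has $\sigma(F_{\theta})=F_{\sigma(\theta)}$ by Lagrange interpolation), and therefore carries $S$ onto $S$ because $\mathbf e_a-\mathbf e_b$ is rational. Consequently the conjugates of $\theta_0$ all lie in $S$, so each equals $\theta_0+(\text{rational})\alpha$; summing them and using that the trace of $\theta_0$ is rational shows $\theta_0\in\mathbb Q+\mathbb Q\alpha$, and likewise every $\theta_r\in\mathbb Q+\mathbb Q\alpha$. Feeding this back into $\sigma(\theta_1)\in S$ forces every conjugate of $\alpha$ to be a rational multiple of $\alpha$; writing these as $c_1\alpha,\dots,c_e\alpha$ with $c_1=1$, rationality of the trace gives $\sum c_i=0$ (unless $\alpha\in\mathbb Q$), whence $\sum_{i<j}c_ic_j=-\tfrac12\sum c_i^2\neq0$, and rationality of the second elementary symmetric function then yields $\alpha^2\in\mathbb Q$. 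Thus either $\alpha\in\mathbb Q$ (and all $\theta_r$ are rational algebraic integers, i.e.\ integers) or $[\mathbb Q(\alpha):\mathbb Q]=2$ and $\alpha\in\mathbb Q\sqrt\Delta$ for a unique square-free integer $\Delta>1$; in the latter case $\theta_r-\theta_0=w\sqrt\Delta$ with $w\in\mathbb Q$, but this is an algebraic integer whose norm $-w^2\Delta$ is a rational integer, so square-freeness of $\Delta$ forces $w\in\mathbb Z$. This is precisely (b).

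With (a) and (b) in hand the rest is bookkeeping. Set $\theta_0-\theta_r=m_r\sqrt\Delta$ with $m_r\in\mathbb Z$, $m_0=0$, and $g=\gcd\{m_r\}$; the phase equations become $\mathrm{e}^{-\mathrm{i}(\tau\sqrt\Delta)m_r}=\sigma_r\in\{\pm1\}$, so $(\tau\sqrt\Delta)m_r\in\pi\mathbb Z$ for every $r$, and by Bézout $\tau\sqrt\Delta=p\pi/g$ for some $p\in\mathbb Z$. Then $\mathrm{e}^{-\mathrm{i}(\tau\sqrt\Delta)m_r}=(-1)^{p\,m_r/g}$, and since the two pair states are distinct we have $\Lambda^-_{ab,cd}\neq\emptyset$, which forces $p$ to be odd; hence $\sigma_r=+1\iff m_r/g$ is even and $\sigma_r=-1\iff m_r/g$ is odd, i.e.\ (c). Running this backwards—take $p=1$, that is $\tau_0=\pi/(g\sqrt\Delta)$, verify via (c) that $\mathrm{e}^{-\mathrm{i}\tau_0\theta_r}\sigma_r$ equals the constant $\mathrm{e}^{-\mathrm{i}\tau_0\theta_0}$, and resum over $S$ using (a)—gives sufficiency and identifies the minimum time as $\tau_0=\pi/(g\sqrt\Delta)$. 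The only genuinely delicate step is the one in the second paragraph: converting periodicity plus ``eigenvalues are algebraic integers'' into the quadratic-field dichotomy of (b); the passage from Pair-LPST to strong cospectrality is routine linear algebra over eigenprojectors, and (c) is elementary $\gcd$/parity arithmetic.
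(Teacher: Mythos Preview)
The paper does not prove this lemma at all: it is quoted verbatim from Chen and Godsil \cite[Theorem~3.9]{ChG19} and used as a black box, so there is no ``paper's own proof'' to compare against. Your argument is therefore being measured against the original source rather than this paper.

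That said, your outline is correct and is essentially the standard argument due to Godsil and Coutinho, adapted from vertex states to pair states. The reduction of Pair-LPST to the phase system $\mathrm{e}^{\mathrm{i}\tau(\theta_0-\theta_r)}=\sigma_r$ via eigenprojectors is the usual first step, and your treatment of (a) and (c) is routine and accurate. The interesting part is (b), and your Galois-theoretic argument---showing that periodicity forces rational ratios of eigenvalue differences, that Galois stability of $S$ then makes every conjugate of $\alpha=\theta_1-\theta_0$ a rational multiple of $\alpha$, and that the symmetric-function trick pins $\alpha^2\in\mathbb Q$---is clean and self-contained. It is the same mechanism as in Coutinho's thesis and the Chen--Godsil paper, though they typically phrase it via an explicit ``ratio condition'' lemma rather than your direct elementary-symmetric computation; the content is identical. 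Your integrality step for the differences (norm of $w\sqrt\Delta$ plus square-freeness of $\Delta$) is also standard. One small point worth making explicit in a final write-up: the case $|S|=1$ cannot occur for \emph{distinct} strongly cospectral pair states, which justifies taking $\alpha\neq 0$; and the nonemptiness of $\Lambda^-_{ab,cd}$ (needed to force $p$ odd) likewise follows from distinctness. Both are implicit in your sketch but deserve a sentence.
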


\begin{lemma}\emph{ (See  \cite[Theorem~2.4.4] {Coutinho14})}\label{PST}
Assume that $u$ and $v$ are two distinct vertices of a graph $G$. Set $S=\mathrm{supp}_{A_G}(\mathbf{e}_{u})=\left \{ \mu_{r} \mid  0\le r \le k \right\}$ with $\mu_{0}\in \Lambda^{+}_{u,v}$. Then $G$ has PST between $u$ and $v$ if and only if all the following conditions hold:
\begin{itemize}
\item[\rm (a)] $u$ and $v$ are adjacency strongly cospectral;
 \item[\rm (b)] The non-zero eigenvalues of $S$ are either all integers or all quadratic integers. Moreover, if the non-zero eigenvalues of $S$ are all quadratic integers, then there exist integers $a,  b_{0}, \dots,  b_{k}$ and square-free integers $\Delta$ satisfying
 $$
 \mu_{r}=\frac{1}{2}(a+b_{r}\sqrt{\Delta}),~~ r=0,\dots,k.
 $$
\item[\rm (c)] Let $g=\gcd\left(\left\{\frac{\mu_{0}-\mu_{r}}{\sqrt{\Delta}}\right\}^{k}_{r=0}\right)$. Then
\begin{itemize}
\item[\rm(i)] $\mu_{r}\in\Lambda^{+}_{u,v}$ if and only if $\frac{\mu_{0}-\mu_{r}}{g\sqrt{\Delta}}$ is even;
 \item[\rm(ii)] $\mu_{r}\in\Lambda^{-}_{u,v}$ if and only if $\frac{\mu_{0}-\mu_{r}}{g\sqrt{\Delta}}$ is odd.
\end{itemize}
\end{itemize}

If the above conditions hold, then $G$ has PST between $u$ and $v$ at time $\tau \in \mathbb{R}$, where $\tau$ is an odd multiple of $\tau_0=\frac{\pi}{g\sqrt{\Delta}}$.
\end{lemma}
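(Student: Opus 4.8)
The plan is to read PST off directly from the spectral decomposition \eqref{ASpecDec2} of $U_{A_G}(\tau)$: first reduce the equation $U_{A_G}(\tau)\mathbf{e}_u=\lambda\mathbf{e}_v$ to a finite system of phase equations, then settle that system using algebraic number theory together with an elementary $\gcd$/parity computation.

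\emph{Step 1 (reduction to phase equations).} Since $U_{A_G}(\tau)$ is unitary and $\|\mathbf{e}_u\|=\|\mathbf{e}_v\|=1$, $G$ has PST between $u$ and $v$ at $\tau$ exactly when $U_{A_G}(\tau)\mathbf{e}_u=\lambda\mathbf{e}_v$ for some unit $\lambda$. Applying the eigenprojector $E_{\mu_r}$ and using \eqref{ASpecDec2} gives $e^{-\mathrm{i}\tau\mu_r}E_{\mu_r}\mathbf{e}_u=\lambda E_{\mu_r}\mathbf{e}_v$; comparing norms forces $\mathrm{supp}_{A_G}(\mathbf{e}_u)=\mathrm{supp}_{A_G}(\mathbf{e}_v)=S$, and since $E_{\mu_r}\mathbf{e}_u$ and $E_{\mu_r}\mathbf{e}_v$ are real the scalar $\sigma_r:=\lambda e^{\mathrm{i}\tau\mu_r}$ lies in $\{\pm1\}$ with $E_{\mu_r}\mathbf{e}_u=\sigma_r E_{\mu_r}\mathbf{e}_v$. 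This is exactly adjacency strong cospectrality (a), and it identifies $\Lambda^{\pm}_{u,v}=\{\mu_r\in S:\sigma_r=\pm1\}$. Normalising $\mu_0\in\Lambda^{+}_{u,v}$ we get $\sigma_0=1$ and $\lambda=e^{-\mathrm{i}\tau\mu_0}$, and because the vectors $E_{\mu_r}\mathbf{e}_u$ lie in pairwise orthogonal eigenspaces, the equation $U_{A_G}(\tau)\mathbf{e}_u=\lambda\mathbf{e}_v$ is equivalent to the finite system
\[
 e^{\mathrm{i}\tau(\mu_0-\mu_r)}=\sigma_r\qquad(\mu_r\in S).
\]
Conversely, once (a) holds, any solution $\tau$ of this system yields PST. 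Two observations are recorded for later: $|S|\ge 2$ (otherwise $U_{A_G}(\tau)\mathbf{e}_u$ is a scalar multiple of $\mathbf{e}_u\ne\lambda\mathbf{e}_v$), and $\Lambda^{-}_{u,v}\ne\emptyset$ (otherwise $E_\mu\mathbf{e}_u=E_\mu\mathbf{e}_v$ for all $\mu$ gives $\mathbf{e}_u=\mathbf{e}_v$).

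\emph{Step 2 (the dichotomy, giving (b)).} From $e^{\mathrm{i}\tau(\mu_0-\mu_r)}=\sigma_r\in\{\pm1\}$ one gets $\tau(\mu_0-\mu_r)\in\pi\mathbb{Z}$ for every $\mu_r\in S$. Choosing $\mu_1\in S\setminus\{\mu_0\}$ and writing $\tau(\mu_0-\mu_1)=\pi n_1$ with $n_1\in\mathbb{Z}\setminus\{0\}$ (here $\tau\ne 0$ since $u\ne v$), division gives $\frac{\mu_0-\mu_r}{\mu_0-\mu_1}=\frac{n_r}{n_1}\in\mathbb{Q}$ for all $r$: every eigenvalue in $S$ differs from $\mu_0$ by a rational multiple of the algebraic integer $\mu_0-\mu_1$. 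Since the characteristic polynomial of $A_G$ has integer coefficients, the eigenvalue set of $A_G$ (hence $S$) is a set of algebraic integers closed under Galois conjugation over $\mathbb{Q}$; combining this closure with the rationality of the difference ratios forces the dichotomy of (b): either all eigenvalues in $S$ are rational, hence integers, or there is a square-free $\Delta>1$ and integers $a,b_0,\dots,b_k$ with $\mu_r=\frac12(a+b_r\sqrt\Delta)$ — this is the algebraic-number-theoretic core of the characterisation of periodic vertices. In the quadratic case $\mu_0-\mu_r=\frac12(b_0-b_r)\sqrt\Delta$ is an algebraic integer, which (as $\Delta$ is square-free) forces $b_0\equiv b_r\pmod 2$, so $\nu_r:=\frac{\mu_0-\mu_r}{\sqrt\Delta}=\frac{b_0-b_r}{2}\in\mathbb{Z}$; in the integral case one sets $\Delta=1$ and $\nu_r=\mu_0-\mu_r\in\mathbb{Z}$. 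I expect this step to be the main obstacle, since it is the only one that uses genuine algebraic number theory rather than just linear algebra and unitarity.

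\emph{Step 3 ($\gcd$ and parity, giving (c) and the time).} Writing $t:=\tau\sqrt\Delta/\pi$, the phase system becomes: $t\nu_r\in 2\mathbb{Z}$ if $\mu_r\in\Lambda^{+}_{u,v}$ and $t\nu_r\in 2\mathbb{Z}+1$ if $\mu_r\in\Lambda^{-}_{u,v}$. With $g=\gcd(\{\nu_r\}_{r=0}^k)$, a Bézout relation $g=\sum_r c_r\nu_r$ ($c_r\in\mathbb{Z}$) gives $tg=\sum_r c_r(t\nu_r)\in\mathbb{Z}$, say $tg=m$; hence $t=m/g$ and $t\nu_r=m(\nu_r/g)$ with $\nu_r/g\in\mathbb{Z}$. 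Because $\Lambda^{-}_{u,v}\ne\emptyset$, some $m(\nu_r/g)$ is odd, so $m$ is odd, and then $m(\nu_r/g)\equiv\nu_r/g\pmod 2$ for every $r$; the sign conditions therefore read exactly $\mu_r\in\Lambda^{+}_{u,v}\iff\frac{\mu_0-\mu_r}{g\sqrt\Delta}$ even and $\mu_r\in\Lambda^{-}_{u,v}\iff\frac{\mu_0-\mu_r}{g\sqrt\Delta}$ odd, which is (c), and $\tau=m\pi/(g\sqrt\Delta)$ is an odd multiple of $\tau_0=\pi/(g\sqrt\Delta)$. For the converse, assuming (a), (b), (c), any odd integer $m$ and $\tau=m\pi/(g\sqrt\Delta)$ makes $t\nu_r=m(\nu_r/g)$ an integer of the parity demanded by (c) for every $r$ (using $g\mid\nu_r$), so the phase system of Step 1 holds and $G$ has PST between $u$ and $v$ at every such $\tau$. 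This establishes the equivalence together with the stated description of the transfer times.
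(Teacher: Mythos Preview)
The paper does not prove this lemma: it is quoted verbatim from Coutinho's thesis \cite[Theorem~2.4.4]{Coutinho14} and used as a black box, so there is no in-paper argument to compare your proposal against. What you have written is in fact a compressed version of Coutinho's original proof: the reduction in Step~1 to the phase system $e^{\mathrm{i}\tau(\mu_0-\mu_r)}=\sigma_r$, the number-theoretic dichotomy in Step~2, and the B\'ezout/parity bookkeeping in Step~3 are exactly the three stages of that argument, and your Steps~1 and~3 are complete and correct as written.

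The only place that needs more than you have given is Step~2, as you yourself flag. Two points deserve care. First, the assertion that $S=\mathrm{supp}_{A_G}(\mathbf{e}_u)$ is closed under Galois conjugation over $\mathbb{Q}$ is not simply because the full spectrum of $A_G$ is; one needs that each $E_{\mu_r}$ is a polynomial in $A_G$ with coefficients in the splitting field, so that a Galois automorphism $\sigma$ sends $E_{\mu_r}$ to $E_{\sigma(\mu_r)}$ and hence $(E_{\mu_r})_{uu}\ne 0\iff (E_{\sigma(\mu_r)})_{uu}\ne 0$. Second, the passage from ``all difference ratios $(\mu_0-\mu_r)/(\mu_0-\mu_1)$ are rational and $S$ is Galois-closed'' to the integer/quadratic dichotomy is the genuinely nontrivial step; the cleanest route is to show that $(\mu_0-\mu_1)^2\in\mathbb{Q}$ (hence in $\mathbb{Z}$, being an algebraic integer), which one gets by observing that every Galois conjugate of $\mu_0-\mu_1$ is a rational multiple of $\mu_0-\mu_1$ and that the product of all conjugates is a rational integer. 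Once $(\mu_0-\mu_1)^2=\Delta\cdot m^2$ with $\Delta$ square-free, the form $\mu_r=\frac12(a+b_r\sqrt\Delta)$ and the integrality of $\nu_r=(\mu_0-\mu_r)/\sqrt\Delta$ follow as you indicate. With those two points filled in, your proof is complete and matches the cited source.
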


\begin{lemma}\label{Upair2}
Let  $\mathbf{e}_a-\mathbf{e}_b$ and $\mathbf{e}_c-\mathbf{e}_d$ be two Laplacian strongly cospectral pair states in a graph $G$. Set $\theta_{0}\in \Lambda^{+}_{ab,cd}$ and $\chi=\exp(-\mathrm{i}\tau_{0}\theta_{0})$. If $G$ has Pair-LPST between $\mathbf{e}_a-\mathbf{e}_b$ and $\mathbf{e}_c-\mathbf{e}_d$ at the minimum time $\tau_{0}$, then for any $k\in \mathbb{Z} $, we have
\begin{align*}
U_{L_{G}}(2k\tau_{0})(\mathbf{e}_{a}-\mathbf{e}_{b}) &=\chi^{2k} (\mathbf{e}_{a}-\mathbf{e}_{b}),\\
U_{L_{G}}((2k+1)\tau_{0})(\mathbf{e}_{a}-\mathbf{e}_{b}) &=\chi^{2k+1} (\mathbf{e}_{c}-\mathbf{e}_{d}).
\end{align*}
\end{lemma}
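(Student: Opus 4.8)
The plan is to expand $U_{L_G}(t)(\mathbf e_a-\mathbf e_b)$ in Laplacian eigenprojectors and read off the phase accumulated at the times $\ell\tau_0$, $\ell\in\mathbb Z$. Write $S=\mathrm{supp}_{L_G}(\mathbf e_a-\mathbf e_b)=\{\theta_0,\dots,\theta_m\}$ with $\theta_0\in\Lambda^{+}_{ab,cd}$; then $\mathbf e_a-\mathbf e_b=\sum_{r=0}^m F_{\theta_r}(\mathbf e_a-\mathbf e_b)$ and, by \eqref{LSpecDec2}, $U_{L_G}(t)(\mathbf e_a-\mathbf e_b)=\sum_{r=0}^m\exp(-\mathrm i t\theta_r)F_{\theta_r}(\mathbf e_a-\mathbf e_b)$. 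Since Pair-LPST occurs at the minimum time $\tau_0$, Lemma \ref{Coutinho} is available: its part (b) gives that $n_r:=\frac{\theta_0-\theta_r}{g\sqrt\Delta}$ is an integer (with $g$ and $\Delta$ as in that lemma), and that $\tau_0=\frac{\pi}{g\sqrt\Delta}$.

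The key computation is that for every $\ell\in\mathbb Z$ and every $r$,
\[
\exp(-\mathrm i\,\ell\tau_0\theta_r)=\chi^{\ell}\exp\bigl(\mathrm i\,\ell\tau_0(\theta_0-\theta_r)\bigr)=\chi^{\ell}\exp(\mathrm i\pi\ell n_r)=\chi^{\ell}(-1)^{\ell n_r}.
\]
When $\ell=2k$ is even this phase equals $\chi^{2k}$ for every $r$, so the spectral sum reassembles $\chi^{2k}(\mathbf e_a-\mathbf e_b)$, which is the first identity. When $\ell=2k+1$ is odd, $(-1)^{\ell n_r}=(-1)^{n_r}$, and part (c) of Lemma \ref{Coutinho} identifies this as $+1$ exactly on $\Lambda^{+}_{ab,cd}$ and $-1$ exactly on $\Lambda^{-}_{ab,cd}$; hence
\[
U_{L_G}((2k+1)\tau_0)(\mathbf e_a-\mathbf e_b)=\chi^{2k+1}\Bigl(\sum_{\theta_r\in\Lambda^{+}_{ab,cd}}F_{\theta_r}(\mathbf e_a-\mathbf e_b)-\sum_{\theta_r\in\Lambda^{-}_{ab,cd}}F_{\theta_r}(\mathbf e_a-\mathbf e_b)\Bigr).
\]

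To finish I would invoke Laplacian strong cospectrality to replace $F_{\theta_r}(\mathbf e_a-\mathbf e_b)$ by $+F_{\theta_r}(\mathbf e_c-\mathbf e_d)$ on $\Lambda^{+}_{ab,cd}$ and by $-F_{\theta_r}(\mathbf e_c-\mathbf e_d)$ on $\Lambda^{-}_{ab,cd}$, collapsing the bracket to $\sum_{r=0}^m F_{\theta_r}(\mathbf e_c-\mathbf e_d)$; since strong cospectrality also forces $\mathrm{supp}_{L_G}(\mathbf e_c-\mathbf e_d)=S$ (because $F_\theta(\mathbf e_a-\mathbf e_b)=\mathbf 0\iff F_\theta(\mathbf e_c-\mathbf e_d)=\mathbf 0$), this truncated sum is exactly $\mathbf e_c-\mathbf e_d$, giving the second identity. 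I do not expect a genuine obstacle here: the statement is essentially bookkeeping on top of Lemma \ref{Coutinho}, and the only points needing a little care are tracking the parity of $n_r$ and observing that the two pair states share the same Laplacian eigenvalue support (so the truncated projector sum recovers the full vector). The even case could alternatively be bootstrapped from the $\ell=1$ case via $U_{L_G}(s+t)=U_{L_G}(s)U_{L_G}(t)$, but the uniform spectral computation above handles all $\ell$ at once.
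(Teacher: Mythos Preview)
Your proof is correct and follows essentially the same approach as the paper: expand $U_{L_G}(\ell\tau_0)(\mathbf e_a-\mathbf e_b)$ in Laplacian eigenprojectors, use Lemma~\ref{Coutinho}(c) to identify $\exp(\mathrm i\tau_0(\theta_0-\theta_r))$ with $\pm1$ according to whether $\theta_r\in\Lambda^{\pm}_{ab,cd}$, and then invoke strong cospectrality to reassemble $\mathbf e_c-\mathbf e_d$. The only organizational difference is that the paper establishes the single-step identities $U_{L_G}(\tau_0)(\mathbf e_a-\mathbf e_b)=\chi(\mathbf e_c-\mathbf e_d)$ and $U_{L_G}(\tau_0)(\mathbf e_c-\mathbf e_d)=\chi(\mathbf e_a-\mathbf e_b)$ and then inducts, whereas you compute the phase $(-1)^{\ell n_r}$ for all $\ell$ at once; you yourself note this alternative at the end.
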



\begin{proof}
Set $S=\mathrm{{supp}}_{L_G}(\mathbf{e}_{a}-\mathbf{e}_{b})$. Note that $\mathbf{e}_a-\mathbf{e}_b$ and $\mathbf{e}_c-\mathbf{e}_d$ are Laplacian strongly cospectral, then $S=\Lambda^{+}_{ab,cd}\cup \Lambda^{-}_{ab,cd}$. Assume that $G$ has Pair-LPST between $\mathbf{e}_a-\mathbf{e}_b$ and $\mathbf{e}_c-\mathbf{e}_d$ at the minimum time $\tau_{0}$. It follows from \eqref{LSpecDec2} and Lemma \ref{Coutinho} that
\begin{equation}
\begin{aligned}\label{Upair}
U_{L_{G}}(\tau_{0})(\mathbf{e}_{a}-\mathbf{e}_{b})
=&\sum_{\theta_{r}\in S}\exp(-\mathrm{i}\tau_{0}\theta_{r})F_{\theta_r}(\mathbf{e}_{a}-\mathbf{e}_{b})\\
=&\exp(-\mathrm{i}\tau_{0}\theta_{0})\sum_{\theta_{r}\in S}\exp(\mathrm{i}\tau_{0}(\theta_{0}-\theta_{r}))F_{\theta_r}(\mathbf{e}_{a}-\mathbf{e}_{b})\\
=&\exp(-\mathrm{i}\tau_{0}\theta_{0})\left(\sum_{\theta_{r}\in \Lambda_{ab,cd}^{+}}\exp(\mathrm{i}\tau_{0}(\theta_{0}-\theta_{r}))F_{\theta_r}(\mathbf{e}_{a}-\mathbf{e}_{b}) \right.\\&\left.
+\sum_{\theta_{r}\in \Lambda_{ab,cd}^{-}}\exp(\mathrm{i}\tau_{0}(\theta_{0}-\theta_{r}))F_{\theta_r}(\mathbf{e}_{a}-\mathbf{e}_{b})\right)\\
=&\exp(-\mathrm{i}\tau_{0}\theta_{0})\left(\sum_{\theta_{r}\in \Lambda_{ab,cd}^{+}}F_{\theta_r}(\mathbf{e}_{c}-\mathbf{e}_{d})+
\sum_{\theta_{r}\in \Lambda_{ab,cd}^{-}}F_{\theta_r}(\mathbf{e}_{c}-\mathbf{e}_{d})\right)\\
=&\exp(-\mathrm{i}\tau_{0}\theta_{0})(\mathbf{e}_{c}-\mathbf{e}_{d}).
\end{aligned}
\end{equation}

Similarly, the transition matrix $U_{L_{G}}(\tau_{0})$ also satisfies that
\begin{equation}\label{Upair1}
U_{L_{G}}(\tau_{0})(\mathbf{e}_{c}-\mathbf{e}_{d})=\exp(-\mathrm{i}\tau_{0}\theta_{0})(\mathbf{e}_{a}-\mathbf{e}_{b}).
\end{equation}

Together with (\ref{Upair}) and (\ref{Upair1}), it is easy to verify the conclusion by induction.
\qed
\end{proof}

Similar to Lemma \ref{Upair2}, by Lemma \ref{PST}, we get the following  result.
\begin{lemma}\label{Upst2}
Suppose that $u$ and $v$ are two adjacency strongly cospectral vertices in a graph $G$. Set $\mu_{0}\in \Lambda^{+}_{u,v}$ and $\lambda=\exp(-\mathrm{i}\tau_{0}\mu_{0})$. If $G$ has PST between $u$ and $v$ at the minimum time $\tau_{0}$, then for any $k\in \mathbb{Z} $, we have
\begin{equation*}
\begin{aligned}
U_{A_{G}}(2k\tau_{0})\mathbf{e}_{u}&=\lambda^{2k} \mathbf{e}_{u},\\
U_{A_{G}}((2k+1)\tau_{0})\mathbf{e}_{u}&=\lambda^{2k+1} \mathbf{e}_{v}.
\end{aligned}
\end{equation*}
\end{lemma}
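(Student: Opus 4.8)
The plan is to mimic the proof of Lemma~\ref{Upair2} verbatim, replacing the Laplacian spectral decomposition \eqref{LSpecDec2} by the adjacency one \eqref{ASpecDec2} and Lemma~\ref{Coutinho} by Lemma~\ref{PST}. First I would fix $S=\mathrm{supp}_{A_G}(\mathbf{e}_u)=\{\mu_r\mid 0\le r\le k\}$ with $\mu_0\in\Lambda^{+}_{u,v}$. Since $u$ and $v$ are adjacency strongly cospectral, $S=\Lambda^{+}_{u,v}\cup\Lambda^{-}_{u,v}$, the supports of $\mathbf{e}_u$ and $\mathbf{e}_v$ coincide (because $E_\mu\mathbf{e}_u=\mathbf{0}\iff E_\mu\mathbf{e}_v=\mathbf{0}$), hence $\mathbf{e}_v=\sum_{\mu_r\in S}E_{\mu_r}\mathbf{e}_v$, and $E_{\mu_r}\mathbf{e}_u=E_{\mu_r}\mathbf{e}_v$ on $\Lambda^{+}_{u,v}$ while $E_{\mu_r}\mathbf{e}_u=-E_{\mu_r}\mathbf{e}_v$ on $\Lambda^{-}_{u,v}$.

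Next I would read off the relevant phases from Lemma~\ref{PST}. At the minimum time $\tau_0=\frac{\pi}{g\sqrt{\Delta}}$ we have $\tau_0(\mu_0-\mu_r)=\pi\cdot\frac{\mu_0-\mu_r}{g\sqrt{\Delta}}$, and condition~(c) of Lemma~\ref{PST} says the integer $\frac{\mu_0-\mu_r}{g\sqrt{\Delta}}$ is even precisely when $\mu_r\in\Lambda^{+}_{u,v}$ and odd precisely when $\mu_r\in\Lambda^{-}_{u,v}$; therefore $\exp(\mathrm{i}\tau_0(\mu_0-\mu_r))=1$ for $\mu_r\in\Lambda^{+}_{u,v}$ and $=-1$ for $\mu_r\in\Lambda^{-}_{u,v}$. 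Plugging this into \eqref{ASpecDec2} exactly as in \eqref{Upair} gives
\begin{align*}
U_{A_G}(\tau_0)\mathbf{e}_u
&=\exp(-\mathrm{i}\tau_0\mu_0)\sum_{\mu_r\in S}\exp(\mathrm{i}\tau_0(\mu_0-\mu_r))E_{\mu_r}\mathbf{e}_u\\
&=\exp(-\mathrm{i}\tau_0\mu_0)\left(\sum_{\mu_r\in\Lambda^{+}_{u,v}}E_{\mu_r}\mathbf{e}_u-\sum_{\mu_r\in\Lambda^{-}_{u,v}}E_{\mu_r}\mathbf{e}_u\right)\\
&=\exp(-\mathrm{i}\tau_0\mu_0)\left(\sum_{\mu_r\in\Lambda^{+}_{u,v}}E_{\mu_r}\mathbf{e}_v+\sum_{\mu_r\in\Lambda^{-}_{u,v}}E_{\mu_r}\mathbf{e}_v\right)=\lambda\mathbf{e}_v,
\end{align*}
and, since $\Lambda^{+}_{u,v}=\Lambda^{+}_{v,u}$, the same computation with $u$ and $v$ interchanged yields $U_{A_G}(\tau_0)\mathbf{e}_v=\lambda\mathbf{e}_u$.

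Finally I would close by induction using the semigroup law $U_{A_G}(s+t)=U_{A_G}(s)U_{A_G}(t)$: from $U_{A_G}(\tau_0)\mathbf{e}_u=\lambda\mathbf{e}_v$ and $U_{A_G}(\tau_0)\mathbf{e}_v=\lambda\mathbf{e}_u$ one gets $U_{A_G}((2k+1)\tau_0)\mathbf{e}_u=\lambda^{2k+1}\mathbf{e}_v$ and $U_{A_G}(2k\tau_0)\mathbf{e}_u=\lambda^{2k}\mathbf{e}_u$ for $k\ge 0$, and the negative indices follow by applying the inverse $U_{A_G}(-\tau_0)=U_{A_G}(\tau_0)^{-1}$, which sends $\mathbf{e}_v\mapsto\lambda^{-1}\mathbf{e}_u$ and $\mathbf{e}_u\mapsto\lambda^{-1}\mathbf{e}_v$. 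I do not expect a genuine obstacle: this is the verbatim adjacency analogue of Lemma~\ref{Upair2}. The only point needing slight care is that Lemma~\ref{PST} phrases the PST times as odd multiples of $\tau_0$ rather than isolating $\tau_0$ itself; but its condition~(c) already records the parity of $\frac{\mu_0-\mu_r}{g\sqrt{\Delta}}$, which is exactly the information used above.
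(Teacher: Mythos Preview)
Your proposal is correct and matches the paper's approach exactly: the paper does not write out a proof of Lemma~\ref{Upst2} but simply says it follows ``similar to Lemma~\ref{Upair2}, by Lemma~\ref{PST}'', which is precisely the substitution you carry out. Your added remarks on extending to negative $k$ via $U_{A_G}(-\tau_0)=U_{A_G}(\tau_0)^{-1}$ make the induction over all of $\mathbb{Z}$ explicit, which the paper leaves implicit.
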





\section{Pair-LPST in tensor product of graphs} \label{Tensor product}

Let $G$ and $H$  be two graphs with adjacency matrices $A_G$ and $A_H$, respectively. The \emph{tensor product} of $G$ and $H$, denoted by $G\times H$, is the graph with the vertex set
$$
V_{G\times H}=\left \{(u,v) \mid u\in V_G, v\in V_H \right \},
$$
whose adjacency matrix is the Kronecker product $A_G\otimes A_H$. In this section, we investigate the existence of Pair-LPST in the tensor product of two regular graphs and construct several families of graphs having Pair-LPST.

\begin{lemma}\label{UMatrice}
Let $G$ be an $r_1$-regular graph and $H$ an $r_2$-regular graph. Set $L_G=\sum_{r=0}^{p}\theta_{r}F_{\theta_{r}}$. Then
 \begin{equation} \label{UGH}
U_{L_{G\times H}}(t)=e^{-\mathrm{i}r_{1}r_{2}t}\sum_{r=0}^{p}F_{\theta_{r}}\otimes U_{A_H}((\theta_{r}-r_1)t).
\end{equation}
\end{lemma}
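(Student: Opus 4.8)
The plan is to reduce everything to the algebra of the eigenprojectors $F_{\theta_r}$ of $L_G$ by first rewriting the Laplacian of $G\times H$ in terms of $L_G$ and $A_H$. Since $G$ is $r_1$-regular and $H$ is $r_2$-regular, every vertex $(u,v)$ of $G\times H$ has degree $r_1 r_2$, and the adjacency matrix of $G\times H$ is $A_G\otimes A_H$; hence
\begin{equation*}
L_{G\times H}=r_1 r_2\,(I_G\otimes I_H)-A_G\otimes A_H .
\end{equation*}
Using $A_G=r_1 I_G-L_G$ (valid because $G$ is $r_1$-regular), this becomes $L_{G\times H}=r_1 r_2\,(I_G\otimes I_H)-r_1\,(I_G\otimes A_H)+L_G\otimes A_H$.

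Next I would substitute the spectral decomposition $L_G=\sum_{r=0}^{p}\theta_r F_{\theta_r}$ together with the resolution of the identity $\sum_{r=0}^{p}F_{\theta_r}=I_G$, which gives
\begin{equation*}
L_{G\times H}=\sum_{r=0}^{p}F_{\theta_r}\otimes\bigl(r_1 r_2\,I_H+(\theta_r-r_1)A_H\bigr).
\end{equation*}
Write $M_r=r_1 r_2\,I_H+(\theta_r-r_1)A_H$. Because the $F_{\theta_r}$ are mutually orthogonal idempotents, $(F_{\theta_r}\otimes M_r)(F_{\theta_s}\otimes M_s)=\delta_{rs}\,F_{\theta_r}\otimes M_rM_s$, so the summands form a block decomposition: $\bigl(\sum_{r}F_{\theta_r}\otimes M_r\bigr)^{k}=\sum_{r}F_{\theta_r}\otimes M_r^{\,k}$ for every $k\ge 1$, and the $k=0$ terms match as well since $\sum_{r}F_{\theta_r}\otimes I_H=I_{G\times H}$. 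Feeding this into the power series for $\exp(-\mathrm{i}t L_{G\times H})$ yields
\begin{equation*}
U_{L_{G\times H}}(t)=\sum_{r=0}^{p}F_{\theta_r}\otimes\exp\bigl(-\mathrm{i}t\,M_r\bigr).
\end{equation*}

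Finally, since $r_1 r_2 I_H$ and $(\theta_r-r_1)A_H$ commute, $\exp(-\mathrm{i}t M_r)=e^{-\mathrm{i}r_1 r_2 t}\exp(-\mathrm{i}t(\theta_r-r_1)A_H)=e^{-\mathrm{i}r_1 r_2 t}\,U_{A_H}((\theta_r-r_1)t)$ by the definition of the transition matrix in \eqref{ASpecDec2}; pulling the scalar $e^{-\mathrm{i}r_1 r_2 t}$ out of the sum gives exactly \eqref{UGH}. There is no serious obstacle here: the only points that require care are the identity $L_{G\times H}=r_1 r_2 I-A_G\otimes A_H$ (which uses regularity of \emph{both} factors, as does the substitution $A_G=r_1I_G-L_G$) and the observation that the cross terms $F_{\theta_r}\otimes M_r$ with $r\ne s$ annihilate one another, so that the exponential distributes over the sum; everything else is routine bookkeeping with Kronecker products.
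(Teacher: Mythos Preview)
Your argument is correct and follows essentially the same route as the paper: both compute $L_{G\times H}=r_1r_2I-A_G\otimes A_H$ from regularity, feed in the spectral decomposition of $L_G$ (equivalently $A_G$), and exponentiate. The only cosmetic difference is that the paper also spectrally decomposes $A_H=\sum_s\mu_sE_{\mu_s}$, obtains the full spectral decomposition $L_{G\times H}=\sum_{r,s}(r_1r_2-(r_1-\theta_r)\mu_s)F_{\theta_r}\otimes E_{\mu_s}$, exponentiates eigenvalue by eigenvalue, and then re-sums over $s$ to recover $U_{A_H}((\theta_r-r_1)t)$; you bypass this detour by using the orthogonal-idempotent property of the $F_{\theta_r}$ to push the exponential directly onto each block $M_r$, which is slightly cleaner but not materially different.
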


\begin{proof}
Since $G$ is an $r_1$-regular graph, it follows from the spectral decomposition of $L_G$ that $A_G=\sum_{r=0}^{p}(r_1-\theta_{r})F_{\theta_{r}}$. Let $\mu_0>\mu_1>\cdots > \mu_{d}$ be all the distinct adjacency eigenvalues of $H$ and $E_{\mu_{s}}$ the eigenprojector corresponding to $\mu_{s}~(0\le s\le d)$. Then the spectral decomposition of $A_H$ is $A_H=\sum_{s=0}^{d}\mu_{s}E_{\mu_{s}}$. It is easy to verify that
\begin{equation} \label{UGH2}
 A_{G}\otimes A_{H}=\sum_{r=0}^{p}\sum_{s=0}^{d}(r_1-\theta_{r})\mu_{s}F_{\theta_{r}}\otimes E_{\mu_{s}}.
\end{equation}
Note that $G\times H$ is $r_1r_2$-regular with the Laplacian matrix
\begin{equation} \label{UGH1}
L_{G\times H}=r_1r_2I_{n\times m}-A_{G}\otimes A_{H}.
\end{equation}
Combining (\ref{UGH2}) with (\ref{UGH1}), we obtain
$$
L_{G\times H}=\sum_{r=0}^{p}\sum_{s=0}^{d}(r_{1}r_{2}-(r_1-\theta_{r})\mu_{s})F_{\theta_{r}}\otimes E_{\mu_{s}}.
$$
It follows that
\begin{equation} \label{UGH3}
U_{L_{G\times H}}(t)=e^{-\mathrm{i}r_{1}r_{2}t}\sum_{r=0}^{p}\sum_{s=0}^{d}F_{\theta_{r}}\otimes e^{\mathrm{i}t(r_1-\theta_{r})\mu_{s}}E_{\mu_{s}},
\end{equation}
which leads to (\ref{UGH}).
\qed
\end{proof}

\begin{theorem}
\label{tensor1}
Let $G$ be an $r_1$-regular graph with $n$ vertices and $H$ an $r_2$-regular graph with $m$ vertices. Assume that $\mathbf{e}^{n}_a-\mathbf{e}^{n}_b$ and $\mathbf{e}^{n}_c-\mathbf{e}^{n}_d$ are Laplacian strongly cospectral in $G$. Set
 $$
 S=\mathrm{supp}_{L_G}(\mathbf{e}^{n}_{a}-\mathbf{e}^{n}_{b})=\left \{\theta_{r} \mid  0\le r \le k \right\}, ~\theta_{0}\in\Lambda^{+}_{ab,cd}.
 $$
Suppose that $H$ admits PST between $w$ and $z$ at the minimum time $\tau$ with a phase factor $\lambda~(\left |\lambda  \right |=1 )$. Then $G\times H$ has Pair-LPST between $\mathbf e_{(a,w)}^{n\times m}-\mathbf e_{(b,w)}^{n\times m}$ and $\mathbf e_{(c,z)}^{n\times m}-\mathbf e_{(d,z)}^{n\times m}$ at time $t$ if and only if all the following conditions hold:
\begin{itemize}
\item[\rm (a)] $(\theta_{r}-r_1)t$ is an odd multiple of $\tau$ for each $\theta_{r}\in S$;
 \item[\rm (b)] $\lambda$ is a primitive p-th root of unity, where $p$ is even and $\lambda^{\frac{p}{2}}=-1$. And
\begin{itemize}
\item[\rm(i)] $\theta_{r}\in\Lambda^{+}_{ab,cd}$ if and only if $\frac{\theta_{0}t-\theta_{r}t}{\tau}\equiv 0 \pmod{p}$, and
 \item[\rm(ii)] $\theta_{r}\in\Lambda^{-}_{ab,cd}$ if and only if $\frac{\theta_{0}t-\theta_{r}t}{\tau}\equiv \frac{p}{2}\pmod{p}$.
\end{itemize}
\end{itemize}
\end{theorem}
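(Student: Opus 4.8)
The plan is to evaluate the transition operator $U_{L_{G\times H}}(t)$ on the source pair state directly via Lemma~\ref{UMatrice} and then to decide when the result is a unimodular scalar times the target pair state. Writing $F_{\theta_r}$ for the Laplacian eigenprojectors of $G$ and using $\mathbf{e}^{n\times m}_{(a,w)}-\mathbf{e}^{n\times m}_{(b,w)}=(\mathbf{e}^{n}_a-\mathbf{e}^{n}_b)\otimes\mathbf{e}^{m}_w$ (and likewise for $(c,z),(d,z)$), formula~\eqref{UGH} gives
\begin{equation}\label{plan-exp}
U_{L_{G\times H}}(t)\bigl((\mathbf{e}^{n}_a-\mathbf{e}^{n}_b)\otimes\mathbf{e}^{m}_w\bigr)=e^{-\mathrm{i}r_1r_2 t}\sum_{r=0}^{k}F_{\theta_r}(\mathbf{e}^{n}_a-\mathbf{e}^{n}_b)\otimes U_{A_H}\bigl((\theta_r-r_1)t\bigr)\mathbf{e}^{m}_w,
\end{equation}
the sum running only over $S=\{\theta_0,\dots,\theta_k\}$ since $F_{\theta_r}(\mathbf{e}^{n}_a-\mathbf{e}^{n}_b)=\mathbf{0}$ for $\theta_r\notin S$. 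Because $H$ has PST between $w$ and $z$, the vertices $w$ and $z$ are adjacency strongly cospectral, and Lemma~\ref{Upst2} gives $U_{A_H}(j\tau)\mathbf{e}^{m}_w=\lambda^{j}\mathbf{e}^{m}_w$ for even $j$ and $\lambda^{j}\mathbf{e}^{m}_z$ for odd $j$. Taking the two pair states of $G$ to be distinct (as in Lemma~\ref{Coutinho}), Laplacian strong cospectrality forces $\Lambda^{-}_{ab,cd}\neq\emptyset$; set $\varepsilon_r=+1$ for $\theta_r\in\Lambda^{+}_{ab,cd}$ and $\varepsilon_r=-1$ for $\theta_r\in\Lambda^{-}_{ab,cd}$, so that $\varepsilon_0=+1$, $F_{\theta_r}(\mathbf{e}^{n}_c-\mathbf{e}^{n}_d)=\varepsilon_r F_{\theta_r}(\mathbf{e}^{n}_a-\mathbf{e}^{n}_b)$ on $S$, and $\mathbf{e}^{n}_c-\mathbf{e}^{n}_d=\sum_{r=0}^{k}\varepsilon_r F_{\theta_r}(\mathbf{e}^{n}_a-\mathbf{e}^{n}_b)$.

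For necessity, I would assume $U_{L_{G\times H}}(t)\bigl((\mathbf{e}^{n}_a-\mathbf{e}^{n}_b)\otimes\mathbf{e}^{m}_w\bigr)=\chi\bigl((\mathbf{e}^{n}_c-\mathbf{e}^{n}_d)\otimes\mathbf{e}^{m}_z\bigr)$ with $|\chi|=1$ and apply $F_{\theta_r}\otimes I_{m}$ to \eqref{plan-exp}. Orthogonality of the $F_{\theta_{r'}}$ collapses the left side to $e^{-\mathrm{i}r_1 r_2 t}F_{\theta_r}(\mathbf{e}^{n}_a-\mathbf{e}^{n}_b)\otimes U_{A_H}((\theta_r-r_1)t)\mathbf{e}^{m}_w$ and the right side to $\chi\varepsilon_r F_{\theta_r}(\mathbf{e}^{n}_a-\mathbf{e}^{n}_b)\otimes\mathbf{e}^{m}_z$; cancelling the nonzero factor $F_{\theta_r}(\mathbf{e}^{n}_a-\mathbf{e}^{n}_b)$ leaves $U_{A_H}((\theta_r-r_1)t)\mathbf{e}^{m}_w=\chi\varepsilon_r e^{\mathrm{i}r_1 r_2 t}\mathbf{e}^{m}_z$ for every $\theta_r\in S$. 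Hence $H$ has PST from $w$ to $z$ at each time $(\theta_r-r_1)t$, so by the final assertion of Lemma~\ref{PST} every $(\theta_r-r_1)t$ is an odd multiple of $\tau$; this is (a). Feeding (a) back through Lemma~\ref{Upst2} turns the last display into $\lambda^{(\theta_r-r_1)t/\tau}=\chi\varepsilon_r e^{\mathrm{i}r_1 r_2 t}$, and dividing by the $r=0$ instance gives $\lambda^{(\theta_r-\theta_0)t/\tau}=\varepsilon_r$ for all $\theta_r\in S$, with integer exponents. Choosing $\theta_r\in\Lambda^{-}_{ab,cd}$ then gives $\lambda^{N}=-1$ for some nonzero integer $N$, so $\lambda$ is a root of unity; if $p$ denotes its order, $p$ is even and $\lambda^{p/2}=-1$ (the unique element of order two in $\mathbb{C}^{\ast}$), which is the first half of (b); and, since $\lambda$ is then a primitive $p$-th root of unity with $\lambda^{p/2}=-1$, the equalities $\lambda^{(\theta_r-\theta_0)t/\tau}=\varepsilon_r$ say precisely that $\frac{\theta_0 t-\theta_r t}{\tau}\equiv 0\pmod{p}$ on $\Lambda^{+}_{ab,cd}$ and $\equiv\frac{p}{2}\pmod{p}$ on $\Lambda^{-}_{ab,cd}$, i.e.\ (b)(i)--(ii).

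For sufficiency, I would assume (a) and (b). By (a) and Lemma~\ref{Upst2}, $U_{A_H}((\theta_r-r_1)t)\mathbf{e}^{m}_w=\lambda^{(\theta_r-r_1)t/\tau}\mathbf{e}^{m}_z$ for every $\theta_r\in S$, so the right side of \eqref{plan-exp} becomes $e^{-\mathrm{i}r_1 r_2 t}\sum_{r=0}^{k}\lambda^{(\theta_r-r_1)t/\tau}F_{\theta_r}(\mathbf{e}^{n}_a-\mathbf{e}^{n}_b)\otimes\mathbf{e}^{m}_z$. Conditions (b)(i)--(ii) are exactly $\lambda^{(\theta_r-\theta_0)t/\tau}=\varepsilon_r$, equivalently $\lambda^{(\theta_r-r_1)t/\tau}=\varepsilon_r\,\lambda^{(\theta_0-r_1)t/\tau}$ for each $\theta_r\in S$, so the sum collapses to $\lambda^{(\theta_0-r_1)t/\tau}\bigl(\sum_{r=0}^{k}\varepsilon_r F_{\theta_r}(\mathbf{e}^{n}_a-\mathbf{e}^{n}_b)\bigr)\otimes\mathbf{e}^{m}_z=\lambda^{(\theta_0-r_1)t/\tau}(\mathbf{e}^{n}_c-\mathbf{e}^{n}_d)\otimes\mathbf{e}^{m}_z$. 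Therefore $U_{L_{G\times H}}(t)\bigl(\mathbf{e}^{n\times m}_{(a,w)}-\mathbf{e}^{n\times m}_{(b,w)}\bigr)=\chi\bigl(\mathbf{e}^{n\times m}_{(c,z)}-\mathbf{e}^{n\times m}_{(d,z)}\bigr)$ with $\chi=e^{-\mathrm{i}r_1 r_2 t}\lambda^{(\theta_0-r_1)t/\tau}$ of modulus one, i.e.\ $G\times H$ has Pair-LPST at time $t$.

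I anticipate two delicate points. First, the passage from a vector identity in $\mathbb{C}^{nm}$ to scalar identities: this is legitimate because the nonzero vectors $F_{\theta_r}(\mathbf{e}^{n}_a-\mathbf{e}^{n}_b)$ lie in mutually orthogonal eigenspaces of $L_G$, so $F_{\theta_r}\otimes I_{m}$ isolates a single summand of \eqref{plan-exp} and a nonzero pure tensor determines its second factor. Second, and more substantially, extracting from the single relation $\lambda^{N}=-1$ the structural conclusion that $\lambda$ is a primitive $p$-th root of unity with $p$ even and $\lambda^{p/2}=-1$; this is exactly where distinctness of the two pair states of $G$ (equivalently $\Lambda^{-}_{ab,cd}\neq\emptyset$) enters, and without it the statement fails. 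The remaining steps are routine manipulations of the spectral decomposition.
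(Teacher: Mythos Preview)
Your proof is correct and follows essentially the same route as the paper: both apply Lemma~\ref{UMatrice} to expand $U_{L_{G\times H}}(t)$ on the tensor pair state, project with $F_{\theta_r}\otimes I_m$ to isolate each summand, invoke Lemma~\ref{PST} to force $(\theta_r-r_1)t$ to be an odd multiple of $\tau$, and then use Lemma~\ref{Upst2} to translate the phase relations into the congruences of (b). Your explicit observation that $\Lambda^{-}_{ab,cd}\neq\emptyset$ (from distinctness of the pair states) is what guarantees some $\lambda^{N}=-1$ and hence that $\lambda$ has finite even order; the paper uses the same fact but leaves it implicit.
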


\begin{proof}
Assume that $G\times H$ has Pair-LPST between $\mathbf e_{(a,w)}^{n\times m}-\mathbf e_{(b,w)}^{n\times m}$ and $\mathbf e_{(c,z)}^{n\times m}-\mathbf e_{(d,z)}^{n\times m}$ at time $t$ with a phase factor $\chi~(\left|\chi\right|=1)$ satisfying
\begin{align}\label{TS11}
U_{L_{G\times H}}(t)(\mathbf e_{a}^{n}-\mathbf e_{b}^{n})\otimes  \mathbf e_{w}^{m}=\chi(\mathbf e_{c}^{n}-\mathbf e_{d}^{n})\otimes  \mathbf e_{z}^{m}.
\end{align}
By the definition of Laplacian eigenvalue support, together with (\ref {UGH}) and (\ref {TS11}), we have
 \begin{align}\label{NPG}
e^{-\mathrm{i}tr_{1}r_{2}}\sum_{r=0}^{k}F_{\theta_{r}}(\mathbf e_{a}^{n}-\mathbf e_{b}^{n})\otimes U_{A_H}((\theta_{r}-r_1)t) \mathbf e_{w}^{m}
=\chi(\mathbf e_{c}^{n}-\mathbf e_{d}^{n})\otimes \mathbf e_{z}^{m}.
\end{align}

Multiplying both sides of (\ref{NPG}) on the left by $F_{\theta_{r}}\otimes I_m~(0\le r \le k)$, we get
\begin{align}\label{Upm}
 e^{-\mathrm{i}tr_{1}r_{2}}U_{A_H}((\theta_{r}-r_1)t)\mathbf e_{w}^{m}=\pm \chi \mathbf e_{z}^{m},
\end{align}
since $F_{\theta_{r}}(\mathbf e_{a}^{n}-\mathbf e_{b}^{n})=\pm F_{\theta_{r}}(\mathbf e_{c}^{n}-\mathbf e_{d}^{n})$. Moreover, (\ref{Upm}) implies that $H$ has PST between $z$ and $w$ at time $(\theta_{r}-r_1)t$. By Lemma \ref{PST}, (a) follows. Furthermore, we assume that $(\theta_{r}-r_1)t=k_r\tau$, where $k_r~(0 \le r\le k)$ is odd.

If $\theta_{r}\in\Lambda^{+}_{ab,cd}$, by Lemma \ref{Upst2} and (\ref{Upm}), we have
\begin{align*}
\lambda^{k_{r}}\mathbf e_{z}^{m}=U_{A_H}((\theta_{r}-r_1)t)\mathbf e_{w}^{m}
 =\chi e^{\mathrm{i}tr_{1}r_{2}}\mathbf e_{z}^{m}=U_{A_H}((\theta_{0}-r_1)t)\mathbf e_{w}^{m}=\lambda^{k_{0}}\mathbf e_{z}^{m},
\end{align*}
which implies that $\lambda$ is a primitive p-th root of unity and
$$
\frac{\theta_{0}t-\theta_{r}t}{\tau}\equiv 0 \pmod{p}.
$$

If $\theta_{r}\in\Lambda^{-}_{ab,cd}$, by Lemma \ref{Upst2} and (\ref{Upm}), we get
$$
 \lambda^{k_{r}}\mathbf e_{z}^{m}=U_{A_H}((\theta_{r}-r_1)t)\mathbf e_{w}^{m}
 =-\chi e^{\mathrm{i}tr_{1}r_{2}}\mathbf e_{z}^{m}=-U_{A_H}((\theta_{0}-r_1)t)\mathbf e_{w}^{m}=-\lambda^{k_{0}}\mathbf e_{z}^{m}.
$$
It follows that $\lambda^{k_{0}-k_{r}}=-1$, which means that $p$ is even and satisfies that $\lambda^{\frac{p}{2}}=-1$,
$$
\frac{\theta_{0}t-\theta_{r}t}{\tau}\equiv \frac{p}{2}\pmod{p}.
$$
This proves (b).

Conversely, suppose that (a) holds. By Lemma \ref{Upst2}, we get
$$
U_{A_H}((\theta_{0}-r_1)t) \mathbf e_{w}^{m}=\lambda^{k_0} \mathbf e_{z}^{m}.
$$
According to the definition of Laplacian eigenvalue support and (\ref{UGH}), we have
\begin{equation}
\begin{aligned}\label{Uten}
U_{L_{G\times H}}(t)(\mathbf e_{a}^{n}-\mathbf e_{b}^{n})\otimes  \mathbf e_{w}^{m}
&=e^{-\mathrm{i}tr_{1}r_{2}}\sum_{r=0}^{k}F_{\theta_{r}}(\mathbf e_{a}^{n}-\mathbf e_{b}^{n})\otimes U_{A_H}((\theta_{r}-r_1)t) \mathbf e_{w}^{m}\\
&=e^{-\mathrm{i}tr_{1}r_{2}}\sum_{r=0}^{k}F_{\theta_{r}}(\mathbf e_{a}^{n}-\mathbf e_{b}^{n})\otimes U_{A_H}(\theta_{r}t-\theta_{0}t)U_{A_H}(\theta_{0}t-r_1t) \mathbf e_{w}^{m}.
\end{aligned}
\end{equation}

If $\theta_{r}\in\Lambda^{+}_{ab,cd}$, then by (i) and Lemma \ref{Upst2}, we obtain
\begin{align}\label{Upair+1}
U_{A_H}(\theta_{r}t-\theta_{0}t)U_{A_H}(\theta_{0}t-r_1t) \mathbf e_{w}^{m}=\lambda^{k_0} \mathbf e_{z}^{m}.
\end{align}

If $\theta_{r}\in\Lambda^{-}_{ab,cd}$, then by (ii) and Lemma \ref{Upst2}, we get
\begin{align}\label{Upair-1}
U_{A_H}(\theta_{r}t-\theta_{0}t)U_{A_H}(\theta_{0}t-r_1t) \mathbf e_{w}^{m}=-\lambda^{k_0} \mathbf e_{z}^{m}.
\end{align}
Plugging (\ref{Upair+1}) and (\ref{Upair-1}) into (\ref{Uten}), we have
$$
U_{L_{G\times H}}(t)(\mathbf e_{a}^{n}-\mathbf e_{b}^{n})\otimes  \mathbf e_{w}^{m}=e^{-\mathrm{i}tr_{1}r_{2}}\lambda^{k_0}(\mathbf e_{c}^{n}-\mathbf e_{d}^{n})\otimes  \mathbf e_{z}^{m},
$$
which proves that $G\times H$ has Pair-LPST between $\mathbf e_{(a,w)}^{n\times m}-\mathbf e_{(b,w)}^{n\times m}$ and $\mathbf e_{(c,z)}^{n\times m}-\mathbf e_{(d,z)}^{n\times m}$.
\qed
\end{proof}

Recall that $\mathbf e^n_{a}-\mathbf e^n_{b}$ is Laplacian strongly cospectral with itself. If we take $\mathbf{e}^{n}_a-\mathbf{e}^{n}_b=\mathbf{e}^{n}_c-\mathbf{e}^{n}_d$ in Theorem \ref{tensor1}, then we have the following result immediately.

\begin{cor}
\label{cor1}
Let $G$ be an $r_1$-regular graph with $n$ vertices and $H$ an $r_2$-regular graph with $m$ vertices. For a pair state $\mathbf{e}^{n}_a-\mathbf{e}^{n}_b$ in $G$, let
$$
S=\mathrm{supp}_{L_G}(\mathbf{e}^{n}_{a}-\mathbf{e}^{n}_{b})=\left \{\theta_{r} \mid  0\le r \le k \right\}.
$$
Suppose that $H$ admits PST between $w$ and $z$ at the minimum time $\tau$ with phase factor $\lambda~(\left |\lambda  \right |=1 )$. Then $G\times H$ has Pair-LPST between $\mathbf e_{(a,w)}^{n\times m}-\mathbf e_{(b,w)}^{n\times m}$ and $\mathbf e_{(a,z)}^{n\times m}-\mathbf e_{(b,z)}^{n\times m}$ at time $t$ if and only if all the following conditions hold:
\begin{itemize}
  \item[\rm (a)] $(\theta_{r}-r_1)t$ is an odd multiple of $\tau$ for each $\theta_{r}\in S$;
  \item[\rm (b)] $\lambda$ is a primitive p-th root of unity, and for any $\theta_{r}, \theta_{s}\in S$,
 $$\frac{\theta_{r}t-\theta_{s}t}{\tau}\equiv 0 \pmod{p}.$$
\end{itemize}
\end{cor}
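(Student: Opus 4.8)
The plan is to obtain Corollary \ref{cor1} as the reflexive special case $\mathbf{e}^{n}_c-\mathbf{e}^{n}_d=\mathbf{e}^{n}_a-\mathbf{e}^{n}_b$ of Theorem \ref{tensor1}. First I would record the elementary preliminaries: a pair state is Laplacian strongly cospectral with itself, so $\Lambda^{+}_{ab,ab}=S$ and $\Lambda^{-}_{ab,ab}=\emptyset$; and the two pairs $\{(a,w),(b,w)\}$ and $\{(a,z),(b,z)\}$ of $G\times H$ are genuinely distinct, since $H$ having PST between $w$ and $z$ forces $w\neq z$, so that Pair-LPST between $\mathbf{e}_{(a,w)}^{n\times m}-\mathbf{e}_{(b,w)}^{n\times m}$ and $\mathbf{e}_{(a,z)}^{n\times m}-\mathbf{e}_{(b,z)}^{n\times m}$ is well posed. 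With these observations, condition (a) of Theorem \ref{tensor1} transcribes verbatim, and in condition (b) clause (ii) becomes vacuous because $\Lambda^{-}_{ab,ab}=\emptyset$; moreover the requirement ``$p$ even with $\lambda^{p/2}=-1$'' in Theorem \ref{tensor1}(b) was, in its proof, extracted only from the existence of some $\theta_r\in\Lambda^{-}_{ab,cd}$, so with $\Lambda^{-}_{ab,ab}=\emptyset$ it may be dropped. What remains is: $\lambda$ is a primitive $p$-th root of unity, and clause (i) must hold for every $\theta_r\in S=\Lambda^{+}_{ab,ab}$, i.e. $\frac{\theta_{0}t-\theta_{r}t}{\tau}\equiv 0\pmod{p}$ for all $r$; subtracting two such congruences shows this is equivalent to $\frac{\theta_{r}t-\theta_{s}t}{\tau}\equiv 0\pmod{p}$ for all $\theta_r,\theta_s\in S$, which is exactly Corollary \ref{cor1}(b).

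Since the bookkeeping in (b) is where one could slip, I would in fact prefer to present a short self-contained derivation straight from Lemma \ref{UMatrice} rather than quoting Theorem \ref{tensor1}. Write the source and target states as $(\mathbf{e}^{n}_a-\mathbf{e}^{n}_b)\otimes\mathbf{e}^{m}_w$ and $(\mathbf{e}^{n}_a-\mathbf{e}^{n}_b)\otimes\mathbf{e}^{m}_z$ and apply \eqref{UGH}. Sandwiching $U_{L_{G\times H}}(t)\big((\mathbf{e}^{n}_a-\mathbf{e}^{n}_b)\otimes\mathbf{e}^{m}_w\big)=\chi\,(\mathbf{e}^{n}_a-\mathbf{e}^{n}_b)\otimes\mathbf{e}^{m}_z$ with $F_{\theta_{r}}\otimes I_m$ and using that $F_{\theta_r}(\mathbf{e}^{n}_a-\mathbf{e}^{n}_b)\neq\mathbf{0}$ for $\theta_r\in S$ shows this is equivalent to $H$ having PST between $w$ and $z$ at each time $(\theta_r-r_1)t$, $0\le r\le k$, with one common phase factor $\chi e^{\mathrm{i}tr_1r_2}$. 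By Lemma \ref{PST} this means $(\theta_r-r_1)t=k_r\tau$ with $k_r$ odd (condition (a)), and by Lemma \ref{Upst2} it gives $U_{A_H}((\theta_r-r_1)t)\mathbf{e}^{m}_w=\lambda^{k_r}\mathbf{e}^{m}_z$; the ``common phase'' constraint is then $\lambda^{k_r}=\lambda^{k_0}$ for all $r$, i.e. $\lambda^{k_r-k_0}=1$ with $k_r-k_0=\frac{(\theta_r-\theta_0)t}{\tau}$, which is precisely condition (b). For the converse, substitute $U_{A_H}((\theta_r-r_1)t)\mathbf{e}^{m}_w=\lambda^{k_r}\mathbf{e}^{m}_z=\lambda^{k_0}\mathbf{e}^{m}_z$ back into \eqref{UGH} and use $\sum_{\theta_r\in S}F_{\theta_{r}}(\mathbf{e}^{n}_a-\mathbf{e}^{n}_b)=\mathbf{e}^{n}_a-\mathbf{e}^{n}_b$ to read off Pair-LPST with phase factor $\chi=e^{-\mathrm{i}tr_1r_2}\lambda^{k_0}$.

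I do not anticipate any real obstacle: the statement is a genuine degeneration of Theorem \ref{tensor1}, and all the analytic content (Lemma \ref{UMatrice}, Lemma \ref{PST}, Lemma \ref{Upst2}) is already available. The only point requiring care is the one flagged above — not blindly inheriting the ``$p$ even, $\lambda^{p/2}=-1$'' clause, which is an artifact of a nonempty $\Lambda^{-}_{ab,cd}$, and checking the equivalence between the ``base-point'' form $\frac{\theta_0 t-\theta_r t}{\tau}\equiv 0\pmod p$ and the symmetric form over all pairs $\theta_r,\theta_s$. Running the direct argument from Lemma \ref{UMatrice} sidesteps this bookkeeping altogether and is the write-up I would ultimately choose.
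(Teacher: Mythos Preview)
Your proposal is correct and matches the paper's approach exactly: the paper also obtains Corollary~\ref{cor1} by taking $\mathbf{e}^{n}_c-\mathbf{e}^{n}_d=\mathbf{e}^{n}_a-\mathbf{e}^{n}_b$ in Theorem~\ref{tensor1}, noting that a pair state is Laplacian strongly cospectral with itself so that $\Lambda^{-}_{ab,ab}=\emptyset$. Your additional care in explaining why the ``$p$ even, $\lambda^{p/2}=-1$'' clause disappears, and your alternative direct derivation from Lemma~\ref{UMatrice}, go beyond what the paper writes (which simply says the result follows ``immediately''), but the underlying argument is the same.
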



Theorem \ref{tensor1} and Corollary \ref{cor1}  imply that we can construct tensor products having Pair-LPST using a graph admitting PST, as shown in the following example.

\begin{example}\label{Example1}
{\em Let $K_{n}$ be a complete graph with $n\ge3$ vertices and let $P_2$ denote the path with the vertex set $\left\{0,1\right\}$. It is known that $L_{K_n}$ has the spectral decomposition $L_{K_n}=nF_{n}+0F_{0}$, where
\begin{equation}
\begin{aligned}\label{UDKn}
F_{0}=\frac{1}{n}J_{n},~~F_{n}=I-\frac{1}{n}J_{n}.
\end{aligned}
\end{equation}
For any vertices $a, b\in V(K_n)$, it is easy to verify that
$$
\mathrm{{supp}}_{L_{K_n}}\left(\mathbf e_{a}^{n}-\mathbf e_{b}^{n} \right )=\left\{n\right\}.
$$
Note that $P_2$ has PST between $\mathbf{e}^{2}_{1}$ and $\mathbf{e}^{2}_{0}$ at time $\tau=\frac{\pi}{2}$ with a phase factor $\lambda=-\mathrm{i}$. By Corollary \ref{cor1}, we conclude that $K_{n}\times P_2$ has Pair-LPST between $\mathbf e_{(a,1)}^{n\times 2}-\mathbf e_{(b,1)}^{n\times 2}$ and $\mathbf e_{(a,0)}^{n\times 2}-\mathbf e_{(b,0)}^{n\times 2}$ at time $t=\frac{\pi}{2}$. }
\end{example}

In Theorem \ref{tensor1}, we give a necessary and sufficient condition for the tensor product of two regular graphs $G$ and $H$ to have Pair-LPST when $H$ admits PST. In the following, we give a necessary and sufficient condition for the tensor product of two regular graphs $G$ and $H$ to have Pair-LPST when $H$ admits Pair-PST with respect to $A_H$. Before proceeding, we give the following results.

Let $G$ be an $r$-regular graph with $n$ vertices. If $G$ has Pair-LPST between $\mathbf{e}^{n}_{a}-\mathbf{e}^{n}_{b}$ and $\mathbf{e}^{n}_{c}-\mathbf{e}^{n}_{d}$ with a phase factor $\chi$, then we have
\begin{align}\label{AequalL}
U_{L_G}(\tau)(\mathbf{e}^{n}_{a}-\mathbf{e}^{n}_{b})
=\exp(-\mathrm{i}\tau r)U_{A_G}(-\tau)(\mathbf{e}^{n}_{a}-\mathbf{e}^{n}_{b})
=\chi(\mathbf{e}^{n}_{c}-\mathbf{e}^{n}_{d}),
\end{align}
which implies that the existence of Pair-LPST is equivalent to that of Pair-PST with respect to $A_G$.

Similar to Lemma \ref{Upair2},  if $G$ has Pair-PST with respect to $A_G$ between $\mathbf{e}^{n}_{a}-\mathbf{e}^{n}_{b}$ and $\mathbf{e}^{n}_{c}-\mathbf{e}^{n}_{d}$ at the minimum time $\tau$ with a phase factor $\lambda~(\left |\lambda  \right |=1 )$, then we have
\begin{align}
\label{HTS1}  U_{A_G}(2k\tau)(\mathbf{e}^{n}_{a}-\mathbf{e}^{n}_{b}) &=\lambda^{2k}(\mathbf{e}^{n}_{a}-\mathbf{e}^{n}_{b}),\\
 \label{HTS}  U_{A_G}((2k+1)\tau)(\mathbf{e}^{n}_{a}-\mathbf{e}^{n}_{b}) &=\lambda^{2k+1}(\mathbf{e}^{n}_{c}-\mathbf{e}^{n}_{d}).
\end{align}

\begin{theorem}\label{tensor2}
Let $G$ be an $r_1$-regular graph with $n$ vertices and $H$ an $r_2$-regular graph with $m$ vertices. Let $w$ and $v$ be two adjacency strongly cospectral vertices of $G$. Set
$$
S=\mathrm{supp}_{A_G}(\mathbf{e}^{n}_{w})=\left \{\mu_{r} \mid  0\le r \le k \right\}, ~ \mu_{0}\in\Lambda^{+}_{w,z}.
$$
Suppose that $H$ admits Pair-PST with respect to $A_H$ between $\mathbf{e}^{m}_{a}-\mathbf{e}^{m}_{b}$ and $\mathbf{e}^{m}_{c}-\mathbf{e}^{m}_{d}$ at the minimum time $\tau$ with a phase factor $\lambda~(\left |\lambda  \right |=1 )$. Then $G\times H$ has Pair-LPST between $\mathbf e_{(w,a)}^{n\times m}-\mathbf e_{(w,b)}^{n\times m}$ and $\mathbf e_{(z, c)}^{n\times m}-\mathbf e_{(z,d)}^{n\times m}$ at time $t$ if and only if all the following conditions hold:
\begin{itemize}
  \item[\rm (a)] $\mu_{r}t$ is an odd multiple of $\tau$ for each $\mu_{r}\in S$;
  \item[\rm (b)] $\lambda$ is a primitive p-th root of unity, where $p$ is even and $\lambda^{\frac{p}{2}}=-1$. And
\begin{itemize}
  \item[\rm(i)] $\mu_{r}\in\Lambda^{+}_{w,z}$ if and only if $\frac{\mu_{0}t-\mu_{r}t}{\tau}\equiv 0 \pmod{p}$; and
  \item[\rm(ii)] $\mu_{r}\in\Lambda^{-}_{w,z}$ if and only if $\frac{\mu_{0}t-\mu_{r}t}{\tau}\equiv \frac{p}{2} \pmod{p}$.
\end{itemize}
\end{itemize}
\end{theorem}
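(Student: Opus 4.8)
I would run the argument in parallel with the proof of Theorem \ref{tensor1}, exchanging the roles of the two factors: here $G$ supplies the adjacency strongly cospectral vertices (hence, since $L_{G\times H}=r_1r_2I-A_G\otimes A_H$, the adjacency eigenprojectors $E_{\mu_r}$ of $G$) while $H$ supplies the Pair-PST with respect to $A_H$. The first step is to rewrite the transition matrix. As $G$ is $r_1$-regular, $A_G=\sum_r\mu_rE_{\mu_r}$ with $E_{\mu_r}=F_{r_1-\mu_r}$, so the eigenprojectors of $L_G$ occurring in Lemma \ref{UMatrice} are exactly the $E_{\mu_r}$ and $\theta_r-r_1=-\mu_r$; thus Lemma \ref{UMatrice} becomes
\[
U_{L_{G\times H}}(t)=e^{-\mathrm{i}r_1r_2t}\sum_{\mu_r}E_{\mu_r}\otimes U_{A_H}(-\mu_rt).
\]
I would also record the identifications $\mathbf e_{(w,a)}^{n\times m}-\mathbf e_{(w,b)}^{n\times m}=\mathbf e_w^n\otimes(\mathbf e_a^m-\mathbf e_b^m)$ and $\mathbf e_{(z,c)}^{n\times m}-\mathbf e_{(z,d)}^{n\times m}=\mathbf e_z^n\otimes(\mathbf e_c^m-\mathbf e_d^m)$, the equality $\mathrm{supp}_{A_G}(\mathbf e_w^n)=\mathrm{supp}_{A_G}(\mathbf e_z^n)=S$ coming from strong cospectrality, and the signs $E_{\mu_r}\mathbf e_z^n=\varepsilon_rE_{\mu_r}\mathbf e_w^n$ for $\mu_r\in S$ with $\varepsilon_r=+1$ on $\Lambda^+_{w,z}$ and $\varepsilon_r=-1$ on $\Lambda^-_{w,z}$.

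\emph{Necessity.} Assuming $G\times H$ has the stated Pair-LPST at time $t$ with phase factor $\chi$, I would apply the displayed formula to $\mathbf e_w^n\otimes(\mathbf e_a^m-\mathbf e_b^m)$ (only $\mu_r\in S$ contribute), multiply on the left by $E_{\mu_r}\otimes I_m$, and use orthogonality/idempotence of the $E_{\mu_r}$; since the $G$-factor of the resulting Kronecker product is the nonzero vector $E_{\mu_r}\mathbf e_w^n$, cancelling it gives, for each $\mu_r\in S$,
\[
e^{-\mathrm{i}r_1r_2t}\,U_{A_H}(-\mu_rt)(\mathbf e_a^m-\mathbf e_b^m)=\chi\varepsilon_r(\mathbf e_c^m-\mathbf e_d^m).
\]
This exhibits Pair-PST with respect to $A_H$ between $\mathbf e_a^m-\mathbf e_b^m$ and $\mathbf e_c^m-\mathbf e_d^m$ at time $-\mu_rt$, so by the pair-state analogue of Lemma \ref{PST} (equivalently Lemma \ref{Coutinho} combined with \eqref{AequalL}), $\mu_rt$ is an odd multiple of $\tau$ for every $\mu_r\in S$, which is (a). Writing $\mu_rt=m_r\tau$ with $m_r$ odd and invoking \eqref{HTS}, the identity reduces to $e^{-\mathrm{i}r_1r_2t}\lambda^{-m_r}=\chi\varepsilon_r$; the case $r=0$ gives $\chi=e^{-\mathrm{i}r_1r_2t}\lambda^{-m_0}$, hence $\lambda^{m_0-m_r}=\varepsilon_r$ for all $\mu_r\in S$. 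As $w\ne z$, $\Lambda^-_{w,z}$ is nonempty, so $\lambda^{m_0-m_r}=-1$ for some $r$; therefore $\lambda$ is a root of unity, and writing $p$ for its order the relations $\lambda^{m_0-m_r}\in\{1,-1\}$ together with the attained value $-1$ force $p$ even and $\lambda^{p/2}=-1$. Finally, reading off which of $\pm1$ is attained and using $\tfrac{\mu_0t-\mu_rt}{\tau}=m_0-m_r$ yields the congruences in (b)(i)--(ii).

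\emph{Sufficiency.} Conversely, assuming (a) and (b), I would write $\mu_rt=m_r\tau$ with $m_r$ odd (by (a)), so \eqref{HTS} gives $U_{A_H}(-\mu_rt)(\mathbf e_a^m-\mathbf e_b^m)=\lambda^{-m_r}(\mathbf e_c^m-\mathbf e_d^m)$ and the displayed formula yields
\[
U_{L_{G\times H}}(t)\bigl(\mathbf e_w^n\otimes(\mathbf e_a^m-\mathbf e_b^m)\bigr)=e^{-\mathrm{i}r_1r_2t}\Bigl(\sum_{\mu_r\in S}\lambda^{-m_r}E_{\mu_r}\mathbf e_w^n\Bigr)\otimes(\mathbf e_c^m-\mathbf e_d^m).
\]
By (b)(i)--(ii) one has $\lambda^{-m_r}=\lambda^{-m_0}$ on $\Lambda^+_{w,z}$ and $\lambda^{-m_r}=-\lambda^{-m_0}$ on $\Lambda^-_{w,z}$; together with $E_{\mu_r}\mathbf e_w^n=\varepsilon_rE_{\mu_r}\mathbf e_z^n$ this gives $\lambda^{-m_r}E_{\mu_r}\mathbf e_w^n=\lambda^{-m_0}E_{\mu_r}\mathbf e_z^n$ in all cases, and summing over $\mu_r\in S$ (using $\sum_{\mu_r\in S}E_{\mu_r}\mathbf e_z^n=\mathbf e_z^n$) collapses the right-hand side to $e^{-\mathrm{i}r_1r_2t}\lambda^{-m_0}\,\mathbf e_z^n\otimes(\mathbf e_c^m-\mathbf e_d^m)$, a unimodular scalar times $\mathbf e_{(z,c)}^{n\times m}-\mathbf e_{(z,d)}^{n\times m}$, giving the desired Pair-LPST.

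The transition-matrix rewriting and the sufficiency computation are routine. I expect the main difficulty to lie in the necessity bookkeeping: turning ``$H$ has Pair-PST with respect to $A_H$ at time $-\mu_rt$'' into the precise parity assertion (a), and then distilling from the single family of relations $\{\lambda^{m_0-m_r}=\varepsilon_r\}_{\mu_r\in S}$ both the arithmetic of $\lambda$ (primitivity, $p$ even, $\lambda^{p/2}=-1$) and the two congruence conditions of (b), while keeping the signs $\varepsilon_r$ inherited from the adjacency strong cospectrality of $w$ and $z$ consistent across all of $S$.
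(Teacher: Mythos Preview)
Your proposal is correct and follows essentially the same route as the paper: you derive the adjacency-form transition matrix $U_{L_{G\times H}}(t)=e^{-\mathrm{i}r_1r_2t}\sum_{\mu_r}E_{\mu_r}\otimes U_{A_H}(-\mu_rt)$ from Lemma~\ref{UMatrice}, left-multiply by $E_{\mu_r}\otimes I_m$ to isolate each $\mu_r\in S$, pass to Pair-LPST in $H$ via \eqref{AequalL} and Lemma~\ref{Coutinho} to obtain (a), and handle sufficiency by the same splitting over $\Lambda^{\pm}_{w,z}$ using \eqref{HTS1}--\eqref{HTS}. The only cosmetic difference is that the paper defers the derivation of (b) to the parallel argument in Theorem~\ref{tensor1}, whereas you spell it out (including the observation that $\Lambda^-_{w,z}\neq\emptyset$ forces $\lambda$ to have finite even order), which is a welcome clarification rather than a different method.
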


\begin{proof}
Assume that the spectral decomposition of $A_G$ is $A_G=\sum\limits_{\mu_{r}\in \mathrm{Spec}_A(G)}\mu_{r}E_{\mu_{r}}$. By (\ref{UGH}), we have
\begin{equation} \label{TS2Euqil}
U_{L_{G\times H}}(t)=e^{-\mathrm{i}r_{1}r_{2}t}\sum_{\mu_{r}\in \mathrm{Spec}_A(G)}E_{\mu_r}\otimes U_{A_H}(-\mu_r t).
\end{equation}
If $G\times H$ has Pair-LPST between $\mathbf e_{(w,a)}^{n\times m}-\mathbf e_{(w,b)}^{n\times m}$ and $\mathbf e_{(z, c)}^{n\times m}-\mathbf e_{(z,d)}^{n\times m}$, then there exists a time $t$ and a phase factor $\chi~(\left|\chi\right|=1)$  such that
\begin{align*}
U_{L_{G\times H}}(t) \mathbf e_{w}^{n}\otimes (\mathbf e_{a}^{m}-\mathbf e_{b}^{m})=\chi\mathbf e_{z}^{n}\otimes (\mathbf e_{c}^{m}-\mathbf e_{d}^{m}).
\end{align*}
By the definition of adjacency eigenvalue support and (\ref{TS2Euqil}), we obtain
\begin{align}\label{NPGc}
e^{-\mathrm{i}tr_{1}r_{2}}\sum_{r=0}^{k}E_{\mu_r} \mathbf e_{w}^{n}\otimes U_{A_H}(-\mu_rt) (\mathbf e_{a}^{m}-\mathbf e_{b}^{m})
=\chi\mathbf e_{z}^{n}\otimes (\mathbf e_{c}^{m}-\mathbf e_{d}^{m}).
\end{align}

Left-multiplying $E_{\mu_r}\otimes I_m~(0\le r \le k)$ on both sides of (\ref{NPGc}), we get
\begin{align}\label{NPGc1}
\pm e^{-\mathrm{i}tr_{1}r_{2}}U_{A_H}(-\mu_{r}t)(\mathbf e_{a}^{m}-\mathbf e_{b}^{m})=\chi (\mathbf e_{c}^{m}-\mathbf e_{d}^{m}),
\end{align}
since $E_{\mu_r}\mathbf e_{w}^{n}=\pm E_{\mu_r}\mathbf e_{z}^{n}$. Thus, for any $\mu_{r}\in S$, $H$ has Pair-PST with respect to $A_H$ between $\mathbf e_{a}^{m}-\mathbf e_{b}^{m}$ and $\mathbf e_{c}^{m}-\mathbf e_{d}^{m}$ at time $-\mu_{r}t$. Since $H$ is regular, Pair-PST in $H$ with respect to $A_H$ is equivalent to Pair-LPST. By Lemma \ref{Coutinho}, we get (a).

The proof of (b) is similar to that of Theorem \ref{tensor1} (b), hence we omit the details.

Conversely, set $-\mu_{0}t=k_0\tau$, where $k_0$ is odd. By (\ref{HTS}), we have
$$
U_{A_H}(-\mu_{0}t) (\mathbf e_{a}^{m}-\mathbf e_{b}^{m})=\lambda^{k_0}(\mathbf e_{c}^{m}-\mathbf e_{d}^{m}).
$$
By the definition of adjacency eigenvalue support and (\ref{TS2Euqil}), we obtain
\begin{equation}
\begin{aligned}\label{Utenc}
U_{L_{G\times H}}(t) \mathbf e_{w}^{n}\otimes (\mathbf e_{a}^{m}-\mathbf e_{b}^{m})
=e^{-\mathrm{i}tr_{1}r_{2}}\sum_{r=0}^{k}E_{\mu_r}\mathbf e_{w}^{n}\otimes U_{A_H}(\mu_{0}t-\mu_{r}t)U_{A_H}(-\mu_{0}t) (\mathbf e_{a}^{m}-\mathbf e_{b}^{m}).
\end{aligned}
\end{equation}
For $\mu_{r}\in\Lambda^{+}_{w,z}$, by Theorem \ref{tensor2} (i), (\ref{HTS1}) and (\ref{HTS}), we get
$$
U_{A_H}(\mu_{0}t-\mu_{r}t)U_{A_H}(-\mu_{0}t) (\mathbf e_{a}^{m}-\mathbf e_{b}^{m})=\lambda^{k_0} (\mathbf e_{c}^{m}-\mathbf e_{d}^{m}).
$$
For $\mu_{r}\in\Lambda^{-}_{w,z}$, by Theorem \ref{tensor2} (ii), (\ref{HTS1}) and (\ref{HTS}), we get
$$
U_{A_H}(\mu_{0}t-\mu_{r}t)U_{A_H}(-\mu_{0}t) (\mathbf e_{a}^{m}-\mathbf e_{b}^{m})=-\lambda^{k_0} (\mathbf e_{c}^{m}-\mathbf e_{d}^{m}).
$$
Then by (\ref{Utenc}), we obtain
$$
U_{L_{G\times H}}(t)  \mathbf e_{w}^{n}\otimes (\mathbf e_{a}^{m}-\mathbf e_{b}^{m})
=e^{-\mathrm{i}tr_{1}r_{2}}\lambda^{k_0} \mathbf e_{z}^{n}\otimes   (\mathbf e_{c}^{m}-\mathbf e_{d}^{m}).
$$
Therefore, $G\times H$ has Pair-LPST between $\mathbf e_{(w,a)}^{n\times m}-\mathbf e_{(w,b)}^{n\times m}$ and $\mathbf e_{(z, c)}^{n\times m}-\mathbf e_{(z,d)}^{n\times m}$.
\qed
\end{proof}

Recall that $\mathbf{e}^n_{w}$ is adjacency strongly cospectral with itself. If we take $\mathbf{e}^{n}_w=\mathbf{e}^{n}_z$ in Theorem \ref{tensor2}, then we immediately get the following result.

\begin{cor}\label{cor2}
Let $G$ be an $r_1$-regular graph with $n$ vertices and $H$ an $r_2$-regular graph with $m$ vertices. For a vertex $w$ in $G$, let
$$
S=\mathrm{supp}_{A_G}(\mathbf{e}^{n}_{w} )=\left \{\mu_{r} \mid  0\le r \le k \right\}.
$$
Suppose that $H$ admits Pair-LPST between $\mathbf{e}^{m}_{a}-\mathbf{e}^{m}_{b}$ and $\mathbf{e}^{m}_{c}-\mathbf{e}^{m}_{d}$ at the minimum time $\tau$ with phase factor $\lambda~(\left |\lambda  \right |=1 )$. Then $G\times H$ has Pair-LPST between $\mathbf e_{(w,a)}^{n\times m}-\mathbf e_{(w,b)}^{n\times m}$ and $\mathbf e_{(w, c)}^{n\times m}-\mathbf e_{(w,d)}^{n\times m}$ at time $t$ if and only if all the following conditions hold:
\begin{itemize}
  \item[\rm (a)] $\mu_{r}t$ is an odd multiple of $\tau$ for each $\mu_{r}\in S$;
  \item[\rm (b)] $\lambda$ is a primitive p-th root of unity, and for any $\mu_{r}, \mu_{s}\in S$,
  $$\frac{\mu_{r}t-\mu_{s}t}{\tau}\equiv 0 \pmod{p}.$$
\end{itemize}
\end{cor}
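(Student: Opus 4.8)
The plan is to deduce Corollary~\ref{cor2} directly from Theorem~\ref{tensor2} by taking $z=w$, followed by a short simplification of the resulting congruence conditions. Two observations make the specialization legitimate. First, a vertex state $\mathbf{e}^{n}_w$ is adjacency strongly cospectral with itself, with $\Lambda^{+}_{w,w}=\mathrm{supp}_{A_G}(\mathbf{e}^{n}_w)=S$ and $\Lambda^{-}_{w,w}=\emptyset$, so the cospectrality hypothesis of Theorem~\ref{tensor2} is satisfied. Second, since $H$ is $r_2$-regular, the identity \eqref{AequalL} applied to $H$ shows that admitting Pair-LPST between $\mathbf{e}^{m}_a-\mathbf{e}^{m}_b$ and $\mathbf{e}^{m}_c-\mathbf{e}^{m}_d$ is equivalent to admitting Pair-PST with respect to $A_H$ between the same pair states, and one checks, using that the eigenprojectors of $L_H$ and $A_H$ coincide under $\theta\mapsto r_2-\theta$, that the minimum time $\tau$ is unchanged; thus the hypotheses of Theorem~\ref{tensor2} hold, with the dictionary between the two kinds of transfer furnished by \eqref{AequalL}, \eqref{HTS1} and \eqref{HTS}.

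Next I would rewrite the conclusion of Theorem~\ref{tensor2} under $z=w$. Condition~(a) is literally unchanged, so I may write $\mu_r t=(2k_r+1)\tau$ with $k_r\in\mathbb{Z}$. In condition~(b), item~(ii) becomes vacuous because $\Lambda^{-}_{w,w}=\emptyset$, and the clause ``$p$ is even and $\lambda^{p/2}=-1$'' --- which in the proof of Theorem~\ref{tensor2}(b) (following that of Theorem~\ref{tensor1}(b)) is extracted precisely from the presence of an eigenvalue in $\Lambda^{-}$ --- now has no source and must be dropped; what survives is that $\lambda$ is a primitive $p$-th root of unity together with $\frac{\mu_0 t-\mu_r t}{\tau}\equiv 0\pmod{p}$ for every $\mu_r\in S$. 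Using condition~(a), each quotient $\frac{\mu_0 t-\mu_r t}{\tau}=2(k_0-k_r)$ is an integer, so from
$$
\frac{\mu_r t-\mu_s t}{\tau}=\frac{\mu_0 t-\mu_s t}{\tau}-\frac{\mu_0 t-\mu_r t}{\tau}
$$
the requirement ``$\frac{\mu_0 t-\mu_r t}{\tau}\equiv 0\pmod{p}$ for all $r$'' is equivalent to the symmetric requirement ``$\frac{\mu_r t-\mu_s t}{\tau}\equiv 0\pmod{p}$ for all $\mu_r,\mu_s\in S$'' stated in the corollary (the reverse implication being the case $\mu_s=\mu_0$). This handles the necessity of (a) and (b).

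For sufficiency I would reproduce the converse argument in the proof of Theorem~\ref{tensor2}, which here only involves the case $\mu_r\in\Lambda^{+}_{w,w}$: writing $-\mu_0 t=k_0\tau$ with $k_0$ odd, \eqref{HTS} gives $U_{A_H}(-\mu_0 t)(\mathbf{e}^{m}_a-\mathbf{e}^{m}_b)=\lambda^{k_0}(\mathbf{e}^{m}_c-\mathbf{e}^{m}_d)$; substituting this into \eqref{TS2Euqil} and applying \eqref{HTS1} and \eqref{HTS} together with $\frac{\mu_0 t-\mu_r t}{\tau}\equiv 0\pmod{p}$ to each summand collapses every term $U_{A_H}(\mu_0 t-\mu_r t)U_{A_H}(-\mu_0 t)(\mathbf{e}^{m}_a-\mathbf{e}^{m}_b)$ to $\lambda^{k_0}(\mathbf{e}^{m}_c-\mathbf{e}^{m}_d)$; collecting the summands and using $\sum_{\mu_r\in S}E_{\mu_r}\mathbf{e}^{n}_w=\mathbf{e}^{n}_w$ then yields $U_{L_{G\times H}}(t)\,\mathbf{e}^{n}_w\otimes(\mathbf{e}^{m}_a-\mathbf{e}^{m}_b)=e^{-\mathrm{i}tr_1r_2}\lambda^{k_0}\,\mathbf{e}^{n}_w\otimes(\mathbf{e}^{m}_c-\mathbf{e}^{m}_d)$, which is Pair-LPST between $\mathbf{e}^{n\times m}_{(w,a)}-\mathbf{e}^{n\times m}_{(w,b)}$ and $\mathbf{e}^{n\times m}_{(w,c)}-\mathbf{e}^{n\times m}_{(w,d)}$. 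I do not expect a genuine obstacle here: the only points needing care are recognizing that the ``$p$ even, $\lambda^{p/2}=-1$'' clause of Theorem~\ref{tensor2} disappears when $z=w$, and checking that the two congruence formulations are then equivalent.
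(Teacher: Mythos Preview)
Your proposal is correct and follows exactly the paper's approach: the paper's entire proof of Corollary~\ref{cor2} is the single observation that $\mathbf{e}^{n}_w$ is adjacency strongly cospectral with itself, so one may set $z=w$ in Theorem~\ref{tensor2}. You supply the details the paper omits---why the ``$p$ even, $\lambda^{p/2}=-1$'' clause vanishes when $\Lambda^{-}_{w,w}=\emptyset$, why the congruence based at $\mu_0$ is equivalent to the symmetric form, and why the Pair-LPST/Pair-PST switch for regular $H$ preserves the minimum time---but the underlying argument is identical.
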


\begin{example}\label{Example2}
{\em Let $K_{2n}$ be a complete graph with $2n~(n\ge1)$ vertices and let $C_{4}$ denote the cycle with vertex set $\left\{0,1,2,3\right\}$.  Note that $A_{K_{2n}}=-E_{-1}+(2n-1)E_{2n-1}$, where
 \begin{equation}
\begin{aligned}\label{AUTK2n}
E_{-1}=I-\frac{1}{2n}J_{2n},~~E_{2n-1}=\frac{1}{2n}J_{2n}.
\end{aligned}
\end{equation}
It is known that $C_{4}$ has Pair-LPST between $\mathbf{e}^{4}_{0}-\mathbf{e}^{4}_{1}$ and $\mathbf{e}^{4}_{2}-\mathbf{e}^{4}_{3}$ at time $\tau=\frac{\pi}{2}$ with a phase factor 1, that is,
$$
U_{L_{C_{4}}}\left(\frac{\pi}{2}\right)\left(\mathbf{e}^{4}_{0}-\mathbf{e}^{4}_{1} \right)=\mathbf{e}^{4}_{2}-\mathbf{e}^{4}_{3}.
$$
Since $C_{4}$ is a 2-regular graph, by (\ref{AequalL}), we get
$$
U_{A_{C_{4}}}\left(\frac{\pi}{2}\right)\left(\mathbf{e}^{4}_{0}-\mathbf{e}^{4}_{1} \right)= -\mathbf{e}^{4}_{2}+\mathbf{e}^{4}_{3}.
$$
For any vertex $w\in V(K_{2n})$, it is easy to verify that $\mathrm{supp}_{A_{K_{2n}}}(\mathbf{e}^{2n}_{w})=\left \{ -1, 2n-1 \right\}$. Let $t=\frac{\pi}{2}$ and $\lambda=-1$. Then for each $\mu_{r}\in S$, $\mu_{r}t$ is an odd multiple of $\tau$ and $\lambda$ is a 2nd primitive root of unity. It follows that
 $$
 \frac{(2n-1)t-(-t)}{\tau}\equiv 0 \pmod{2}.
 $$
By Corollary \ref{cor2}, $K_{2n}\times C_4$ has Pair-LPST between $\mathbf e_{(w,0)}^{2n\times 4}-\mathbf e_{(w,1)}^{2n\times 4}$ and $\mathbf e_{(w,2)}^{2n\times 4}-\mathbf e_{(w,3)}^{2n\times 4}$ at time $t=\frac{\pi}{2}$.}
\end{example}

At the end of this section, we consider the existence of Pair-LPST for a more general form of the pair state $\mathbf e_{(a,w)}^{n\times m}-\mathbf e_{(b,z)}^{n\times m}$.

\begin{theorem}\label{tensor3}
Let $G$ be an $r_1$-regular graph with $n$ vertices and $H$ an $r_2$-regular graph with $m$ vertices. Let $a, b\in V(G)$ and $w, z\in V(H)$ and set
$$
S=\mathrm{supp}_{A_G}(\mathbf e_{a}^{n})\cup \mathrm{supp}_{A_G}(\mathbf e_{b}^{n})=\left \{\mu_{r} \mid  0\le r \le k \right\}.
$$
Suppose that $U_{A_H}(\tau)\mathbf e_{z}^{m}=\lambda \mathbf e_{w}^{m}$ holds at the minimum time $\tau$, where $\lambda$ is a primitive p-th root of unity. If there exists a number $t$ satisfying the following conditions:
\begin{itemize}
  \item[\rm (a)] for each $\mu_{r}\in S$, $\mu_{r}t$ is an odd multiple of $\tau$;
  \item[\rm (b)] for any $\mu_{r}, \mu_{s}\in S$,
  $$\frac{\mu_{r}t-\mu_{s}t}{\tau}\equiv 0 \pmod{p},$$
\end{itemize}
then $G\times H$ has Pair-LPST between $\mathbf e_{(a,w)}^{n\times m}-\mathbf e_{(b,z)}^{n\times m}$ and $\mathbf e_{(b,w)}^{n\times m}-\mathbf e_{(a,z)}^{n\times m}$ at time $t$.
\end{theorem}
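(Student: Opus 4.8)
The plan is to follow the pattern of the proof of Theorem~\ref{tensor2}, but to expand $U_{L_{G\times H}}(t)$ through the \emph{adjacency} eigenprojectors of $G$. Writing the spectral decomposition $A_G=\sum_{\mu_r\in\mathrm{Spec}_A(G)}\mu_rE_{\mu_r}$, formula (\ref{UGH}) of Lemma~\ref{UMatrice} becomes exactly (\ref{TS2Euqil}):
\begin{equation*}
U_{L_{G\times H}}(t)=e^{-\mathrm{i}r_1r_2t}\sum_{\mu_r\in\mathrm{Spec}_A(G)}E_{\mu_r}\otimes U_{A_H}(-\mu_rt).
\end{equation*}
Applying this operator to $\mathbf e_{(a,w)}^{n\times m}-\mathbf e_{(b,z)}^{n\times m}=\mathbf e_a^n\otimes\mathbf e_w^m-\mathbf e_b^n\otimes\mathbf e_z^m$ and using that $E_{\mu_r}\mathbf e_a^n=\mathbf 0$ (resp.\ $E_{\mu_r}\mathbf e_b^n=\mathbf 0$) whenever $\mu_r\notin\mathrm{supp}_{A_G}(\mathbf e_a^n)$ (resp.\ $\mu_r\notin\mathrm{supp}_{A_G}(\mathbf e_b^n)$), the sum collapses to indices $\mu_r\in S$.

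Next I would compute $U_{A_H}(-\mu_rt)\mathbf e_w^m$ and $U_{A_H}(-\mu_rt)\mathbf e_z^m$. Since $U_{A_H}(\tau)\mathbf e_z^m=\lambda\mathbf e_w^m$ at the minimum time $\tau$, the vertices $z$ and $w$ admit PST in $H$, hence are adjacency strongly cospectral, and Lemma~\ref{Upst2} applies; as $U_{A_H}(s)$ is a complex symmetric unitary matrix (and $\Lambda^+_{z,w}=\Lambda^+_{w,z}$), the same phase governs both directions, so $U_{A_H}(\tau)\mathbf e_w^m=\lambda\mathbf e_z^m$ as well. By condition~(a) write $\mu_rt=k_r\tau$ with $k_r$ an odd integer (possibly negative); since Lemma~\ref{Upst2} holds for all integers, applying it at $-k_r\tau$ gives
\begin{equation*}
U_{A_H}(-\mu_rt)\mathbf e_w^m=\lambda^{-k_r}\mathbf e_z^m,\qquad U_{A_H}(-\mu_rt)\mathbf e_z^m=\lambda^{-k_r}\mathbf e_w^m.
\end{equation*}
Condition~(b) gives $k_r-k_s=\frac{\mu_rt-\mu_st}{\tau}\equiv0\pmod p$ for all $\mu_r,\mu_s\in S$, and since $\lambda$ has order $p$ the scalar $\lambda^{-k_r}$ is independent of $r$; denote it $\lambda^{-k_0}$. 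Substituting the two identities into the collapsed sum, factoring out $e^{-\mathrm{i}r_1r_2t}\lambda^{-k_0}$, and using $\sum_{\mu_r\in S}E_{\mu_r}\mathbf e_a^n=\mathbf e_a^n$ and $\sum_{\mu_r\in S}E_{\mu_r}\mathbf e_b^n=\mathbf e_b^n$, I obtain
\begin{equation*}
U_{L_{G\times H}}(t)\bigl(\mathbf e_a^n\otimes\mathbf e_w^m-\mathbf e_b^n\otimes\mathbf e_z^m\bigr)=e^{-\mathrm{i}r_1r_2t}\lambda^{-k_0}\bigl(\mathbf e_a^n\otimes\mathbf e_z^m-\mathbf e_b^n\otimes\mathbf e_w^m\bigr)=-e^{-\mathrm{i}r_1r_2t}\lambda^{-k_0}\bigl(\mathbf e_{(b,w)}^{n\times m}-\mathbf e_{(a,z)}^{n\times m}\bigr),
\end{equation*}
and since the scalar $-e^{-\mathrm{i}r_1r_2t}\lambda^{-k_0}$ has modulus $1$, this is precisely Pair-LPST between $\mathbf e_{(a,w)}^{n\times m}-\mathbf e_{(b,z)}^{n\times m}$ and $\mathbf e_{(b,w)}^{n\times m}-\mathbf e_{(a,z)}^{n\times m}$ at time $t$.

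The computation is routine once the decomposition is in place; the only genuine subtlety, and the reason this statement (unlike Theorems~\ref{tensor1} and~\ref{tensor2}) imposes no parity condition such as $p$ even or $\lambda^{p/2}=-1$, is that $U_{A_H}(-\mu_rt)$ carries \emph{both} $\mathbf e_w^m$ and $\mathbf e_z^m$ to the opposite vertex state with the \emph{same} phase $\lambda^{-k_r}$, so no sign discrepancy arises among the summands, and the ``$-1$'' that would otherwise have to be produced by strong cospectrality of a pair state in $G$ is here absorbed trivially via $\mathbf e_a^n\otimes\mathbf e_z^m-\mathbf e_b^n\otimes\mathbf e_w^m=-(\mathbf e_{(b,w)}^{n\times m}-\mathbf e_{(a,z)}^{n\times m})$. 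The one point I would double-check is the reading of condition~(a): that $k_r$ is allowed to be negative and that Lemma~\ref{Upst2}, stated for all $k\in\mathbb Z$, really does cover the value $-k_r$ that I need.
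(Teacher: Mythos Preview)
Your proof is correct and, in fact, more streamlined than the paper's. The paper does not apply $U_{L_{G\times H}}(t)$ directly to the full pair state; instead it computes two individual matrix entries
\[
(\mathbf e_a^n\otimes\mathbf e_w^m)^\top U_{L_{G\times H}}(t)(\mathbf e_a^n\otimes\mathbf e_z^m)
=(\mathbf e_b^n\otimes\mathbf e_z^m)^\top U_{L_{G\times H}}(t)(\mathbf e_b^n\otimes\mathbf e_w^m)
=e^{-\mathrm{i}r_1r_2t}\lambda^{k_0},
\]
then invokes unitarity of $U_{L_{G\times H}}(t)$ to force the two cross entries $(\mathbf e_a^n\otimes\mathbf e_w^m)^\top U(\mathbf e_b^n\otimes\mathbf e_w^m)$ and $(\mathbf e_b^n\otimes\mathbf e_z^m)^\top U(\mathbf e_a^n\otimes\mathbf e_z^m)$ to vanish, and finally expands the fidelity as a four-term sum. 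Your route bypasses the unitarity step entirely by observing that $U_{A_H}(-\mu_r t)$ sends \emph{both} $\mathbf e_w^m\mapsto\lambda^{-k_r}\mathbf e_z^m$ and $\mathbf e_z^m\mapsto\lambda^{-k_r}\mathbf e_w^m$ with the same phase (the paper uses this symmetry too, but only implicitly through the symmetry of $U_{A_H}$). What your approach buys is a one-line direct computation of $U_{L_{G\times H}}(t)$ acting on the pair state; what the paper's approach illustrates is the general trick that a single unit-modulus entry in a unitary matrix pins down an entire column. Your concern about negative $k_r$ is unfounded: Lemma~\ref{Upst2} is indeed stated for all $k\in\mathbb Z$, and the paper itself writes $-\mu_0 t=k_0\tau$ with $k_0$ odd, so negative odd integers are used there as well.
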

\begin{proof}
Assume that $-\mu_{0}t=k_0 \tau$, where $k_{0}$ is odd. By Lemma \ref{Upst2}, we obtain
$$
U_{A_H}(-\mu_{0}t)\mathbf e_{z}^{m}=\lambda^{k_0}\mathbf e_{w}^{m}.
$$
According to (b), for any $\mu_{r}\in S$,
\begin{align}\label{TSORU}
U_{A_H}(-\mu_{r}t)\mathbf e_{z}^{m}=U_{A_H}(-\mu_{r}t+\mu_{0}t)U_{A_H}(-\mu_{0}t)\mathbf e_{z}^{m}=\lambda^{k_0}\mathbf e_{w}^{m}.
\end{align}
Since the sum of all adjacency eigenprojectors of $G$ is equal to the identity matrix, by the definition of $S$ and (\ref{TSORU}), we get
\begin{align}\label{TSORU1}
\sum_{r=0}^{k}(\mathbf e_{a}^{n})^{\top}E_{\mu_r}\mathbf e_{a}^{n}\otimes (\mathbf e_{w}^{m})^{\top}U_{A_H}(-\mu_{r}t)\mathbf e_{z}^{m}
=\sum_{r=0}^{k}(\mathbf e_{b}^{n})^{\top}E_{\mu_r}\mathbf e_{b}^{n}\otimes (\mathbf e_{z}^{m})^{\top}U_{A_H}(-\mu_{r}t)\mathbf e_{w}^{m}
=\lambda^{k_0}.
\end{align}
Combining (\ref{TS2Euqil}) and (\ref{TSORU1}), we have
\begin{align}\label{TSORU2}
(\mathbf e_{a}^{n}\otimes \mathbf e_{w}^{ m})^{\top}U_{L_{G\times H}}(t)\mathbf e_{a}^{n}\otimes \mathbf e_{z}^{ m}
=(\mathbf e_{b}^{n}\otimes \mathbf e_{z}^{ m})^{\top}U_{L_{G\times H}}(t)\mathbf e_{b}^{n}\otimes \mathbf e_{w}^{ m}=e^{-\mathrm{i}r_{1}r_{2}t}\lambda^{k_0}.
\end{align}
Notice that the transition matrix $U_{L_{G\times H}}(t)$ is unitary. By (\ref{TSORU2}), we get
\begin{align}\label{TSORU3}
(\mathbf e_{a}^{n}\otimes \mathbf e_{w}^{ m})^{\top}U_{L_{G\times H}}(t)\mathbf e_{b}^{n}\otimes \mathbf e_{w}^{ m}=(\mathbf e_{b}^{n}\otimes \mathbf e_{z}^{ m})^{\top}U_{L_{G\times H}}(t)\mathbf e_{a}^{n}\otimes \mathbf e_{z}^{ m}=0.
\end{align}
By (\ref{TSORU2}) and (\ref{TSORU3}), we have
\begin{align*}
&\left |\frac{1}{2}(\mathbf e_{(a,w)}^{n\times m}-\mathbf e_{(b,z)}^{n\times m})^{\top}U_{L_{G\times H}}(t)
(\mathbf e_{(b,w)}^{n\times m}-\mathbf e_{(a,z)}^{n\times m}) \right |^2\\
=&\frac{1}{4}\left |(\mathbf e_{a}^{n}\otimes \mathbf e_{w}^{ m})^{\top}U_{L_{G\times H}}(t)\mathbf e_{b}^{n}\otimes \mathbf e_{w}^{ m}
-(\mathbf e_{a}^{n}\otimes \mathbf e_{w}^{ m})^{\top}U_{L_{G\times H}}(t)\mathbf e_{a}^{n}\otimes \mathbf e_{z}^{ m}  \right.\\&\left.
-(\mathbf e_{b}^{n}\otimes \mathbf e_{z}^{ m})^{\top}U_{L_{G\times H}}(t)\mathbf e_{b}^{n}\otimes \mathbf e_{w}^{ m}
+(\mathbf e_{b}^{n}\otimes \mathbf e_{z}^{ m})^{\top}U_{L_{G\times H}}(t)\mathbf e_{a}^{n}\otimes \mathbf e_{z}^{ m}\right |^2 \\
=&\left |-e^{-\mathrm{i}r_{1}r_{2}t}\lambda^{k_0} \right |^2=1,
\end{align*}
which implies that $G\times H$ has Pair-LPST between $\mathbf e_{(a,w)}^{n\times m}-\mathbf e_{(b,z)}^{n\times m}$ and $\mathbf e_{(b,w)}^{n\times m}-\mathbf e_{(a,z)}^{n\times m}$.
\qed
\end{proof}

\begin{example}\label{Example3}
{\em Let $K_{4n}$ be a complete graph with $4n~(n\ge 1)$ vertices and let $P_{2}$ denote the path with the vertex set $\left\{0,1 \right\}$. For any two vertices $ a,b \in V(K_{4n})$, by (\ref{AUTK2n}), we have
$$
S=\mathrm{supp}_{A_{K_{4n}}}(\mathbf e_{a}^{4n})\cup \mathrm{supp}_{A_{K_{4n}}}(\mathbf e_{b}^{4n})=\left \{-1,4n-1 \right\}.
$$
Note that $P_2$ has PST between $\mathbf{e}^{2}_{0}$ and $\mathbf{e}^{2}_{1}$, and
$$
U_{A_{P_2}}\left(\frac{\pi}{2}\right)\mathbf{e}^{2}_{0}=-\mathrm{i}\mathbf{e}^{2}_{1}.
$$
Thus, $\tau=\frac{\pi}{2}$ and $p=4$. By Theorem \ref{tensor3}, we conclude that $K_{4n}\times P_2$ has Pair-LPST between $\mathbf e_{(a,0)}^{4n\times 2}-\mathbf e_{(b,1)}^{4n\times 2}$ and $\mathbf e_{(b,1)}^{4n\times 2}-\mathbf e_{(a,0)}^{4n\times 2}$ at time $\frac{\pi}{2}$ .}
\end{example}

\section{Pair-LPST in double cover of graphs}\label{Double cover}
Let $G$ and $H$ be two graphs that have the same vertex set $V$. The \emph{double cover} of $G$ and $H$, denoted by $G\ltimes H$, is the graph with the vertex set $\left\{0, 1\right\}\times V$, whose adjacency matrix is
$$
A_{G\ltimes H}=\begin{pmatrix}
 A_{G} & A_{H}\\
 A_{H} & A_{G}
\end{pmatrix}.
$$
According to \cite[Theorem 5.2]{Cohgo16}, the adjacency transition matrix of $G\ltimes H$ is
\begin{equation}\label{pairdouble-equation1}
U_{A_{G\ltimes H}}(t)=\frac{1}{2}
\begin{pmatrix}
 U_{A_{G}+A_{H}}(t)+U_{A_{G}-A_{H}}(t) & U_{A_{G}+A_{H}}(t)-U_{A_{G}-A_{H}}(t)\\
 U_{A_{G}+A_{H}}(t)-U_{A_{G}-A_{H}}(t) & U_{A_{G}+A_{H}}(t)+U_{A_{G}-A_{H}}(t)
\end{pmatrix}.
\end{equation}
If $G$ and $H$ are $r_{1}$-regular and $r_{2}$-regular graphs, respectively, then $G\ltimes H$ is a $(r_{1}+r_{2})$-regular graph with the Laplacian matrix
$$
L_{G\ltimes H}=(r_{1}+r_{2})I_{2n}-A_{G\ltimes H}.
$$
It follows that
  \begin{equation}\label{DCU}
U_{L_{G\ltimes H}}(t)=U_{(r_{1}+r_{2})I_{n}-A_{G\ltimes H}}(t)
=\exp(-\mathrm{i}t(r_{1}+r_{2}))U_{A_{G\ltimes H}}(-t).\\
  \end{equation}

In this section, we study the existence of Pair-LPST in the double cover of two regular graphs. Before proceeding, we give the following result.

\begin{lemma}\label{Xlem}
Let $X$ be a Hermitian matrix over $\mathbb{C}$. For any two pair states $\mathbf e_{a}-\mathbf e_{b}$ and $\mathbf e_{c}-\mathbf e_{d}$, the transition matrix of $X$ at any time $\tau$ satisfies that
$$
\left |(\mathbf e_{a}-\mathbf e_{b})^{\top}U_{X}(\tau)(\mathbf e_{c}-\mathbf e_{d})\right|\le2.
$$
\end{lemma}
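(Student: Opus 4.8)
The plan is to expand the bilinear form, bound each scalar entry of the transition matrix by $1$ using unitarity, and combine. First I would write
$$
(\mathbf e_a-\mathbf e_b)^{\top}U_X(\tau)(\mathbf e_c-\mathbf e_d)
=(\mathbf e_a-\mathbf e_b)^{\top}U_X(\tau)\mathbf e_c-(\mathbf e_a-\mathbf e_b)^{\top}U_X(\tau)\mathbf e_d
=\big(U_X(\tau)^{*}(\mathbf e_a-\mathbf e_b)\big)^{*}\mathbf e_c-\big(U_X(\tau)^{*}(\mathbf e_a-\mathbf e_b)\big)^{*}\mathbf e_d,
$$
or more simply expand into the four terms $\mathbf e_a^{\top}U_X(\tau)\mathbf e_c$, $-\mathbf e_a^{\top}U_X(\tau)\mathbf e_d$, $-\mathbf e_b^{\top}U_X(\tau)\mathbf e_c$, $\mathbf e_b^{\top}U_X(\tau)\mathbf e_d$.

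The key observation is that since $X$ is Hermitian, $U_X(\tau)=\exp(-\mathrm i\tau X)$ is unitary, so every column $U_X(\tau)\mathbf e_v$ is a unit vector in the Hermitian norm; hence for any standard basis vectors $\mathbf e_u,\mathbf e_v$ we have $|\mathbf e_u^{\top}U_X(\tau)\mathbf e_v|\le \|\mathbf e_u\|\,\|U_X(\tau)\mathbf e_v\|=1$ by Cauchy–Schwarz. Applying this to each of the four terms and using the triangle inequality gives the bound $4$, which is too weak; so instead I would group the terms as
$$
(\mathbf e_a-\mathbf e_b)^{\top}U_X(\tau)(\mathbf e_c-\mathbf e_d)
=(\mathbf e_a-\mathbf e_b)^{\top}U_X(\tau)\mathbf e_c-(\mathbf e_a-\mathbf e_b)^{\top}U_X(\tau)\mathbf e_d
$$
and bound each of the two summands by $|(\mathbf e_a-\mathbf e_b)^{\top}U_X(\tau)\mathbf e_c|\le\|\mathbf e_a-\mathbf e_b\|\,\|U_X(\tau)\mathbf e_c\|=\sqrt2\cdot 1=\sqrt2$, giving a total of $2\sqrt2$ — still not quite $2$. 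The clean route that actually yields the constant $2$ is to apply Cauchy–Schwarz once to the whole expression: $|(\mathbf e_a-\mathbf e_b)^{\top}U_X(\tau)(\mathbf e_c-\mathbf e_d)|\le \|\mathbf e_a-\mathbf e_b\|\cdot\|U_X(\tau)(\mathbf e_c-\mathbf e_d)\|=\|\mathbf e_a-\mathbf e_b\|\cdot\|\mathbf e_c-\mathbf e_d\|=\sqrt2\cdot\sqrt2=2$, where the middle equality uses that $U_X(\tau)$ is unitary and the outer equalities use that $a\neq b$ and $c\neq d$ so that each pair state has squared norm $2$.

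So the proof is essentially one line once set up correctly. The only point requiring a hair of care — and the step I would flag as the place a referee might want a word — is justifying $\|U_X(\tau)\mathbf v\|=\|\mathbf v\|$ for the Hermitian (complex) inner product: this is exactly the statement that $\exp(-\mathrm i\tau X)$ is unitary when $X=X^{*}$, which follows from $\big(\exp(-\mathrm i\tau X)\big)^{*}\exp(-\mathrm i\tau X)=\exp(\mathrm i\tau X^{*})\exp(-\mathrm i\tau X)=\exp(\mathrm i\tau X)\exp(-\mathrm i\tau X)=I$. Everything else is the Cauchy–Schwarz inequality and the computation $\|\mathbf e_u-\mathbf e_v\|^2=2$ for $u\neq v$. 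I expect no genuine obstacle; the "trap" is merely the temptation to expand into four terms and lose the sharp constant, which the single application of Cauchy–Schwarz to the full pair states avoids.
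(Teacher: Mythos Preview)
Your final ``clean route'' --- a single application of Cauchy--Schwarz to the full pair states, using unitarity of $U_X(\tau)$ to get $\|U_X(\tau)(\mathbf e_c-\mathbf e_d)\|=\sqrt{2}$ --- is exactly the paper's argument. The only cosmetic addition in the paper is the remark that $(\mathbf e_a-\mathbf e_b)^{\top}=(\mathbf e_a-\mathbf e_b)^{H}$ since the vector is real, which you implicitly use when invoking Cauchy--Schwarz for the Hermitian inner product.
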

\begin{proof}
Note that the transition matrix $U_{X}(\tau)$ is unitary. Then
\begin{align*}
\left \|U_{X}(\tau)(\mathbf e_{c}-\mathbf e_{d}) \right \|
&=\sqrt{\left( U_{X}(\tau)(\mathbf e_{c}-\mathbf e_{d}) \right)^{H} \cdot  U_{X}(\tau)(\mathbf e_{c}-\mathbf e_{d}) }\\
&=\sqrt{ (\mathbf e_{c}-\mathbf e_{d})^{H} U_{X}(\tau) ^{H}\cdot  U_{X}(\tau)(\mathbf e_{c}-\mathbf e_{d}) }\\
&=\sqrt{2},
\end{align*}
where $\ast^{H}$ denotes the conjugate transpose of $\ast$.
Since $\mathbf e_{a}-\mathbf e_{b}$ is a real vector, we have
\begin{align*}
\left |(\mathbf e_{a}-\mathbf e_{b})^{\top}U_{X}(\tau)(\mathbf e_{c}-\mathbf e_{d})\right|
&=\left |(\mathbf e_{a}-\mathbf e_{b})^{H}U_{X}(\tau)(\mathbf e_{c}-\mathbf e_{d})\right|\\
&=\left | \left \langle \mathbf e_{a}-\mathbf e_{b}, U_{X}(\tau)(\mathbf e_{c}-\mathbf e_{d})  \right \rangle \right|\\
&\le \left \| \mathbf e_{a}-\mathbf e_{b}  \right \| \cdot \left \| U_{X}(\tau)(\mathbf e_{c}-\mathbf e_{d}) \right \| =2.
\end{align*}
This completes the proof.
\qed\end{proof}

In the following, we use $\mathbf e_{i}\otimes \mathbf e_{u}^{n}~(i=0, 1)$ to denote the vertex state of the vertex $(i, u)$ in $G\ltimes H$, where $\mathbf e_{0}=(1, 0)^{\top}$, $\mathbf e_{1}=(0, 1)^{\top}$.

\begin{theorem}\label{DC1}
Let $G$ be an $r_1$-regular graph and $H$ an $r_2$-regular graph, which have the same vertex set $V$. Denote the adjacency matrices of $G$ and $H$ by $A_G$ and $A_H$, respectively.
\begin{itemize}
\item[\rm (a)] If $a, b\in V$, then $G\ltimes H$ has Pair-LPST between $\mathbf e_{0}\otimes(\mathbf e_{a}^{n}-\mathbf e_{b}^{n})$ and $\mathbf e_{1}\otimes(\mathbf e_{a}^{n}-\mathbf e_{b}^{n})$ if and only if there exists a time $\tau$ such that $\mathbf e_{a}^{n}-\mathbf e_{b}^{n}$ is periodic with respect to $A_{G}+A_{H}$ and $A_{G}-A_{H}$ with phase factors $\chi$ and $-\chi$, respectively.
\item[\rm (b)] If $a, b, c, d\in V$, then $G\ltimes H$ has Pair-LPST between $\mathbf e_{i}\otimes(\mathbf e_{a}^{n}-\mathbf e_{b}^{n})$ and $\mathbf e_{i}\otimes(\mathbf e_{c}^{n}-\mathbf e_{d}^{n})$~$(i=0, 1)$ if and only if there exists a time $\tau$ such that $A_{G}+A_{H}$ and $A_{G}-A_{H}$ have Pair-PST between $\mathbf e_{a}^{n}-\mathbf e_{b}^{n}$ and $\mathbf e_{c}^{n}-\mathbf e_{d}^{n}$ with the same phase factor $\chi$.
\item[\rm (c)] If $a, b, c, d\in V$, then $G\ltimes H$ has Pair-LPST between $\mathbf e_{0}\otimes(\mathbf e_{a}^{n}-\mathbf e_{b}^{n})$ and $\mathbf e_{1}\otimes(\mathbf e_{c}^{n}-\mathbf e_{d}^{n})$ if and only if there exists a time $\tau$ such that $A_{G}+A_{H}$ and $A_{G}-A_{H}$ have Pair-PST between $\mathbf e_{a}^{n}-\mathbf e_{b}^{n}$ and $\mathbf e_{c}^{n}-\mathbf e_{d}^{n}$ with phase factors $\chi$ and $-\chi$, respectively.
\end{itemize}
\end{theorem}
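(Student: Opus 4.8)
The plan is to reduce all three parts to the block decomposition (\ref{pairdouble-equation1}) together with the scalar identity (\ref{DCU}), and then to read off the two coordinates of a $2\times 2$ block vector. Write $M_{+}=A_{G}+A_{H}$ and $M_{-}=A_{G}-A_{H}$; these are real symmetric, so periodicity and Pair-PST with respect to them are instances of the general Hermitian-matrix definitions. For a pair state $v=\mathbf e_{a}^{n}-\mathbf e_{b}^{n}$ of $G$, the vector $\mathbf e_{i}\otimes v$ is exactly the pair state of $\{(i,a),(i,b)\}$ in $G\ltimes H$. The first step is the observation, immediate from (\ref{pairdouble-equation1}), that for every real $s$
\[
U_{A_{G\ltimes H}}(s)\,(\mathbf e_{0}\otimes v)=\tfrac{1}{2}\begin{pmatrix}(U_{M_{+}}(s)+U_{M_{-}}(s))\,v\\(U_{M_{+}}(s)-U_{M_{-}}(s))\,v\end{pmatrix},
\]
with the two blocks exchanged for $\mathbf e_{1}\otimes v$. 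By (\ref{DCU}) we have $U_{L_{G\ltimes H}}(\tau)=\exp(-\mathrm i\tau(r_{1}+r_{2}))\,U_{A_{G\ltimes H}}(-\tau)$, where the unit scalar in front can be absorbed into any phase factor; moreover $G\ltimes H$ carries the sheet-swapping automorphism $(0,u)\leftrightarrow(1,u)$, which commutes with $A_{G\ltimes H}$. So it suffices to determine, for pair states $x,y$ of $G$ and a unit $\chi'$, when $U_{A_{G\ltimes H}}(s)(\mathbf e_{0}\otimes x)$ equals $\chi'(\mathbf e_{0}\otimes y)$ (same sheet) or $\chi'(\mathbf e_{1}\otimes y)$ (other sheet), with $s=-\tau$.

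For (a), take $x=y=v$ and require the image to be $\chi'(\mathbf e_{1}\otimes v)$, i.e.\ in the display the top block is $0$ and the bottom block is $\chi' v$; reading off coordinates, this is equivalent to $U_{M_{+}}(s)v=\chi' v$ and $U_{M_{-}}(s)v=-\chi' v$, which says precisely that $v$ is periodic with respect to $M_{+}$ with phase $\chi'$ and with respect to $M_{-}$ with phase $-\chi'$. The converse is a one-line substitution of these two identities back into the block formula. For (b), require the image to stay in the same sheet, $\chi'(\mathbf e_{0}\otimes w)$ with $w=\mathbf e_{c}^{n}-\mathbf e_{d}^{n}$: then the bottom block must vanish, forcing $U_{M_{+}}(s)v=U_{M_{-}}(s)v=\chi' w$, i.e.\ $M_{+}$ and $M_{-}$ both admit Pair-PST from $v$ to $w$ with the \emph{same} phase. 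For (c), require the image to be $\chi'(\mathbf e_{1}\otimes w)$: the top block vanishes, giving $U_{M_{+}}(s)v=\chi' w$ and $U_{M_{-}}(s)v=-\chi' w$, the two Pair-PST phases being negatives of each other. In every case the reverse implication is again obtained by plugging the transfer identities into the $2\times 2$ block formula, and in (b) the sheet-swap automorphism identifies the $i=0$ claim with the $i=1$ claim, so one value of $i$ suffices.

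I expect no genuine computational obstacle; the care lies entirely in the conventions, and that is what I would write out carefully. First, the scalar phase and the time sign produced by (\ref{DCU}): since $U_{M}(-\tau)=U_{M}(\tau)^{H}$, a periodicity or Pair-PST relation at time $-\tau$ with phase $\eta$ is equivalent to one at time $\tau$ with phase $\overline\eta$, so the existentially quantified time and phase in the statement absorb these changes and no loss of generality results. Second, the passage between the quantitative definition of Pair-LPST, $\bigl|\tfrac{1}{2}(\cdot)^{\top}U(\tau)(\cdot)\bigr|^{2}=1$, and the vector identity $U_{L_{G\ltimes H}}(\tau)(\mathbf e_{i}\otimes x)=\chi(\mathbf e_{j}\otimes y)$ manipulated above: because $U_{L_{G\ltimes H}}$ is unitary and each pair state has squared norm $2$, Lemma \ref{Xlem} (Cauchy--Schwarz) gives the bound $\le 2$ with equality precisely when the vector identity holds for some unit $\chi$. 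With those two conventions fixed, parts (a)--(c) all follow by matching the two coordinates of (\ref{pairdouble-equation1}) in the three respective configurations.
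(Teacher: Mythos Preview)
Your proposal is correct and follows essentially the same approach as the paper: both rely on the block formula (\ref{pairdouble-equation1}), the regularity identity (\ref{DCU}), and the Cauchy--Schwarz bound of Lemma \ref{Xlem}. The only difference is organizational---you apply Lemma \ref{Xlem} once at the $2n$-level to pass to the vector identity and then read off the two block coordinates, whereas the paper first expands the scalar fidelity to an $n$-level expression like $\left|(\mathbf e_{a}^{n}-\mathbf e_{b}^{n})^{\top}(U_{M_{+}}(-\tau)\pm U_{M_{-}}(-\tau))(\mathbf e_{c}^{n}-\mathbf e_{d}^{n})\right|=4$ and then invokes Lemma \ref{Xlem} on each summand together with a (tacit) triangle-inequality equality case.
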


\begin{proof}
Assume that $G\ltimes H$ has Pair-LPST between $\mathbf e_{0}\otimes(\mathbf e_{a}^{n}-\mathbf e_{b}^{n})$ and $\mathbf e_{1}\otimes(\mathbf e_{a}^{n}-\mathbf e_{b}^{n})$ at time $\tau$, that is,
$$
\left |\frac{1}{2}(\mathbf e_{0}\otimes(\mathbf e_{a}^{n}-\mathbf e_{b}^{n}))^{\top}U_{L_{G\ltimes H}}(\tau)(\mathbf e_{1}\otimes(\mathbf e_{a}^{n}-\mathbf e_{b}^{n}))\right|^2=1.
$$
By (\ref{pairdouble-equation1}) and (\ref{DCU}), the above equation leads to
\begin{align}
\label{UCA}
\left | (\mathbf e_{a}^{n}-\mathbf e_{b}^{n})^{\top}(U_{A_{G}+A_{H}}(-\tau)-U_{A_{G}-A_{H}}(-\tau))(\mathbf e_{a}^{n}-\mathbf e_{b}^{n}) \right|^2=16.
\end{align}
By Lemma \ref{Xlem}, Equation (\ref{UCA}) holds if and only if there exists a phase factor $\chi~(\left |\chi \right |=1 )$ such that
$$
U_{A_{G}+A_{H}}(\tau)(\mathbf e_{a}^{n}-\mathbf e_{b}^{n})=
-U_{A_{G}-A_{H}}(\tau)(\mathbf e_{a}^{n}-\mathbf e_{b}^{n})=\chi(\mathbf e_{a}^{n}-\mathbf e_{b}^{n}).
$$
Thus, (a) follows.

By (\ref{DCU}), if $G\ltimes H$ has Pair-LPST between $\mathbf e_{i}\otimes(\mathbf e_{a}^{n}-\mathbf e_{b}^{n})$ and $\mathbf e_{i}\otimes(\mathbf e_{c}^{n}-\mathbf e_{d}^{n})$ at time $\tau$, then
\begin{equation}\label{pairdouble-equation2}
\left | (\mathbf e_{a}^{n}-\mathbf e_{b}^{n})^{\top}(U_{A_{G}+A_{H}}(-\tau)+U_{A_{G}-A_{H}}(-\tau))(\mathbf e_{c}^{n}-\mathbf e_{d}^{n}) \right|^2=16.
\end{equation}
By Lemma \ref{Xlem}, Equation (\ref{pairdouble-equation2}) holds if and only if there exists a phase factor $\chi~(\left |\chi \right |=1 )$ satisfying that
$$
(\mathbf e_{a}^{n}-\mathbf e_{b}^{n})^{\top}U_{A_{G}+A_{H}}(\tau)(\mathbf e_{c}^{n}-\mathbf e_{d}^{n})=
(\mathbf e_{a}^{n}-\mathbf e_{b}^{n})^{\top}U_{A_{G}-A_{H}}(\tau)(\mathbf e_{c}^{n}-\mathbf e_{d}^{n})=2\chi,
$$
which implies that $A_{G}+A_{H}$ and $A_{G}-A_{H}$ have Pair-PST between $\mathbf e_{a}^{n}-\mathbf e_{b}^{n}$ and $\mathbf e_{c}^{n}-\mathbf e_{d}^{n}$ with the same phase factor $\chi$, leading to (b).

The proof of (c) is similar to that of (a). Hence, we omit the details here.
\qed
\end{proof}


Theorem \ref{DC1} (a) states that if $G\ltimes H$ has Pair-LPST between $\mathbf e_{0}\otimes(\mathbf e_{a}^{n}-\mathbf e_{b}^{n})$ and $\mathbf e_{1}\otimes(\mathbf e_{a}^{n}-\mathbf e_{b}^{n})$, then $\mathbf e_{a}^{n}-\mathbf e_{b}^{n}$ is periodic with respect to $A_{G}+A_{H}$ and $A_{G}-A_{H}$ with phase factors $\chi$ and $-\chi$, respectively. That is,
$$
U_{A_{G}+A_{H}}(\tau)(\mathbf e_{a}^{n}-\mathbf e_{b}^{n})=\chi(\mathbf e_{a}^{n}-\mathbf e_{b}^{n}),~~
U_{A_{G}-A_{H}}(\tau)(\mathbf e_{a}^{n}-\mathbf e_{b}^{n})=-\chi(\mathbf e_{a}^{n}-\mathbf e_{b}^{n}).
$$
Assume that $A_{G}$ and $A_{H}$ commute. Then
$$
U_{A_{G}-A_{H}}(\tau)U_{A_{G}+A_{H}}(\tau)(\mathbf e_{a}^{n}-\mathbf e_{b}^{n})
=U_{A_{G}}(2\tau)(\mathbf e_{a}^{n}-\mathbf e_{b}^{n})
=-\chi^{2}(\mathbf e_{a}^{n}-\mathbf e_{b}^{n}),
$$
and
$$
U_{A_{G}+A_{H}}(\tau)U_{-A_{G}+A_{H}}(\tau)(\mathbf e_{a}^{n}-\mathbf e_{b}^{n})
=U_{A_{H}}(2\tau)(\mathbf e_{a}^{n}-\mathbf e_{b}^{n})
=-(\mathbf e_{a}^{n}-\mathbf e_{b}^{n}),
$$
which implies $\mathbf e_{a}^{n}-\mathbf e_{b}^{n}$ is periodic with respect to $A_{G}$ and $A_{H}$. Hence, we can construct new graphs having Pair-LPST by graphs having periodic pair states.

\begin{cor}\label{DC11}
Let $G$ be an $r_1$-regular graph and $H$ an $r_2$-regular graph, which have the same vertex set $V$. Suppose that $A_{G}$ and $A_{H}$ commute. Given vertices $a, b\in V$, assume that $\mathbf e_{a}^{n}-\mathbf e_{b}^{n}$ is periodic with respect to $A_G$ at time $\tau$, and $\mathbf e_{a}^{n}-\mathbf e_{b}^{n}$ is periodic with respect to $A_H$ at the same time $\tau$ with a phase factor $k\in\left \{ \pm \mathrm{i} \right \} $. Then $G\ltimes H$ has Pair-LPST between $\mathbf e_{0}\otimes(\mathbf e_{a}^{n}-\mathbf e_{b}^{n})$ and $\mathbf e_{1}\otimes(\mathbf e_{a}^{n}-\mathbf e_{b}^{n})$ at time $\tau$.
\end{cor}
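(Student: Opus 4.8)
The plan is to deduce the corollary directly from Theorem~\ref{DC1}(a). That theorem reduces the asserted Pair-LPST in $G\ltimes H$ to the statement that $\mathbf{e}_a^n-\mathbf{e}_b^n$ is periodic with respect to both $A_G+A_H$ and $A_G-A_H$ at a common time $\tau$, with phase factors $\chi$ and $-\chi$ for some $\chi$ with $|\chi|=1$; so it suffices to produce such a $\chi$ from the two hypotheses. First I would recast periodicity in eigenvector form: for a Hermitian matrix $M$, periodicity of $\mathbf{e}_a^n-\mathbf{e}_b^n$ at time $\tau$ means $\left|\tfrac12(\mathbf{e}_a^n-\mathbf{e}_b^n)^\top U_M(\tau)(\mathbf{e}_a^n-\mathbf{e}_b^n)\right|^2=1$, and since $\|\mathbf{e}_a^n-\mathbf{e}_b^n\|=\|U_M(\tau)(\mathbf{e}_a^n-\mathbf{e}_b^n)\|=\sqrt{2}$, the Cauchy--Schwarz equality condition used in the proof of Lemma~\ref{Xlem} shows this is equivalent to $U_M(\tau)(\mathbf{e}_a^n-\mathbf{e}_b^n)=\gamma(\mathbf{e}_a^n-\mathbf{e}_b^n)$ for some $\gamma$ with $|\gamma|=1$. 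Hence the hypotheses give $U_{A_G}(\tau)(\mathbf{e}_a^n-\mathbf{e}_b^n)=\alpha(\mathbf{e}_a^n-\mathbf{e}_b^n)$ for some unit $\alpha$, and $U_{A_H}(\tau)(\mathbf{e}_a^n-\mathbf{e}_b^n)=k(\mathbf{e}_a^n-\mathbf{e}_b^n)$ with $k\in\{\pm\mathrm{i}\}$, so $k^2=-1$ and $k^{-1}=-k$.

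Next I would exploit commutativity. Since $A_G$ and $A_H$ commute, so do $A_G$ and $\pm A_H$, whence $U_{A_G+A_H}(\tau)=U_{A_G}(\tau)U_{A_H}(\tau)$ and $U_{A_G-A_H}(\tau)=U_{A_G}(\tau)U_{A_H}(-\tau)$. Applying $U_{A_H}(-\tau)$ to $U_{A_H}(\tau)(\mathbf{e}_a^n-\mathbf{e}_b^n)=k(\mathbf{e}_a^n-\mathbf{e}_b^n)$ yields $U_{A_H}(-\tau)(\mathbf{e}_a^n-\mathbf{e}_b^n)=k^{-1}(\mathbf{e}_a^n-\mathbf{e}_b^n)=-k(\mathbf{e}_a^n-\mathbf{e}_b^n)$. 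Combining these,
\begin{gather*}
U_{A_G+A_H}(\tau)(\mathbf{e}_a^n-\mathbf{e}_b^n)=k\alpha(\mathbf{e}_a^n-\mathbf{e}_b^n),\\
U_{A_G-A_H}(\tau)(\mathbf{e}_a^n-\mathbf{e}_b^n)=-k\alpha(\mathbf{e}_a^n-\mathbf{e}_b^n).
\end{gather*}
Setting $\chi=k\alpha$ (of modulus $1$), the pair state $\mathbf{e}_a^n-\mathbf{e}_b^n$ is periodic with respect to $A_G+A_H$ at time $\tau$ with phase factor $\chi$ and with respect to $A_G-A_H$ at time $\tau$ with phase factor $-\chi$, so Theorem~\ref{DC1}(a) delivers the claimed Pair-LPST between $\mathbf{e}_0\otimes(\mathbf{e}_a^n-\mathbf{e}_b^n)$ and $\mathbf{e}_1\otimes(\mathbf{e}_a^n-\mathbf{e}_b^n)$ at time $\tau$.

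I do not expect a serious obstacle; the computation is short once the right factorization is in place. The only point requiring care is that periodicity with respect to $A_G$ supplies only an unspecified unit phase $\alpha$, so the conclusion must be phrased through ``some $\chi$''; and the sign relation $k^{-1}=-k$, which is precisely the content of the hypothesis $k\in\{\pm\mathrm{i}\}$, is exactly what makes the two resulting phase factors $k\alpha$ and $k^{-1}\alpha$ come out as $\chi$ and $-\chi$ — reflecting the sign flip in $U_{A_G-A_H}(\tau)=U_{A_G}(\tau)U_{A_H}(-\tau)$. Without that hypothesis the two phase factors need not be negatives of one another and Theorem~\ref{DC1}(a) could not be invoked.
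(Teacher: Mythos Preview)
Your proof is correct and follows essentially the same route as the paper: both use the commutativity of $A_G$ and $A_H$ to factor $U_{A_G\pm A_H}(\tau)$ as $U_{A_G}(\tau)U_{A_H}(\pm\tau)$, apply the periodicity eigenvector relations, and use $k^{-1}=-k$ for $k\in\{\pm\mathrm{i}\}$ to produce opposite phase factors. The only cosmetic difference is that the paper plugs the resulting identities for $U_{A_G\pm A_H}(-\tau)$ directly into \eqref{pairdouble-equation1} and \eqref{DCU} to exhibit $U_{L_{G\ltimes H}}(\tau)$ explicitly, whereas you invoke Theorem~\ref{DC1}(a) to finish --- which is arguably cleaner since that theorem already packages exactly this computation.
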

\begin{proof}
Since $\mathbf e_{a}^{n}-\mathbf e_{b}^{n}$ is periodic with respect to $A_G$ at time $\tau$, there exists a phase factor $\chi~(\left |\chi \right |=1 )$ satisfying that
$$
U_{A_{G}}(\tau)(\mathbf e_{a}^{n}-\mathbf e_{b}^{n})
=\chi(\mathbf e_{a}^{n}-\mathbf e_{b}^{n}).
$$
Left-multiplying $U_{A_{G}}(-\tau)$ on both sides of the above equation, we have
\begin{align}\label{DCCG}
(\mathbf e_{a}^{n}-\mathbf e_{b}^{n})
=\chi U_{A_{G}}(-\tau)(\mathbf e_{a}^{n}-\mathbf e_{b}^{n}).
\end{align}
Consider the following cases.

\noindent\emph{Case~1.} $k=\mathrm{i}$. Note that $\mathbf e_{a}^{n}-\mathbf e_{b}^{n}$ is periodic with respect to $A_H$ at time $\tau$ with a phase factor $\mathrm{i}$. Similar to (\ref{DCCG}), we have
\begin{align}\label{DCCH}
(\mathbf e_{a}^{n}-\mathbf e_{b}^{n})
=\mathrm{i} U_{A_{H}}(-\tau)(\mathbf e_{a}^{n}-\mathbf e_{b}^{n}).
\end{align}
Combining (\ref{DCCG}) with (\ref{DCCH}), we get
\begin{align}\label{DCC1}
U_{A_{G}+A_{H}}(-\tau)(\mathbf e_{a}^{n}-\mathbf e_{b}^{n})\nonumber
&=U_{A_{G}}(-\tau)U_{A_{H}}(-\tau)(\mathbf e_{a}^{n}-\mathbf e_{b}^{n})\nonumber\\
&=\mathrm{i}^{-1}U_{A_{G}}(-\tau)(\mathbf e_{a}^{n}-\mathbf e_{b}^{n})\nonumber\\
&=-\mathrm{i}\chi^{-1}(\mathbf e_{a}^{n}-\mathbf e_{b}^{n}),
\end{align}
and
\begin{align}\label{DCC2}
U_{A_{G}-A_{H}}(-\tau)(\mathbf e_{a}^{n}-\mathbf e_{b}^{n})\nonumber
&=U_{A_{G}}(-\tau)U_{A_{H}}(\tau)(\mathbf e_{a}^{n}-\mathbf e_{b}^{n})\nonumber\\
&=\mathrm{i}U_{A_{G}}(-\tau)(\mathbf e_{a}^{n}-\mathbf e_{b}^{n})\nonumber\\
&=\mathrm{i}\chi^{-1}(\mathbf e_{a}^{n}-\mathbf e_{b}^{n}).
\end{align}
By \eqref{pairdouble-equation1}, (\ref{DCU}), (\ref{DCC1}) and (\ref{DCC2}), we have
$$
U_{L_{G\ltimes H}}(\tau)\mathbf e_{0}\otimes(\mathbf e_{a}^{n}-\mathbf e_{b}^{n})=-\mathrm{i}\chi ^{-1}\exp(-\mathrm{i}\tau(r_{1}+r_{2}))\mathbf e_{1}\otimes(\mathbf e_{a}^{n}-\mathbf e_{b}^{n}).
$$
Therefore, $G\ltimes H$ has Pair-LPST between $\mathbf e_{0}\otimes(\mathbf e_{a}^{n}-\mathbf e_{b}^{n})$ and $\mathbf e_{1}\otimes(\mathbf e_{a}^{n}-\mathbf e_{b}^{n})$.

\noindent\emph{Case~2.} $k=-\mathrm{i}$.  The proof is similar to that of Case 1, hence we omit the details here.
\qed
\end{proof}

\begin{example}\label{Example4}
{\em Let $K_{n}$ be a complete graph with $n~(n\ge 2)$ vertices. Note that the spectral decomposition of $A_{K_n}$ is $A_{K_n}=(n-1)E_{n-1}-E_{-1}$, where
\begin{align}\label{last}
E_{n-1}=\frac{1}{n}J_{n},~~E_{-1}=I-\frac{1}{n}J_{n}.
\end{align}
For any pair of vertices $\left\{a, b\right\}$ in $K_{n}$, by (\ref{ASpecDec2}) and (\ref{last}), we have
 $$
U_{A_{K_n}}\left(\frac{\pi}{2}\right)(\mathbf e_{a}^{n}-\mathbf e_{b}^{n})
=\exp\left(\mathrm{i}\frac{\pi}{2}\right)E_{-1}(\mathbf e_{a}^{n}-\mathbf e_{b}^{n})
=\mathrm{i}(\mathbf e_{a}^{n}-\mathbf e_{b}^{n}).
$$
 Therefore, $\mathbf e_{a}^{n}-\mathbf e_{b}^{n}$ is periodic with respect to $A_{K_n}$ at time $\frac{\pi}{2}$ with a phase factor $\mathrm{i}$. By Corollary \ref{DC11}, $K_{n}\ltimes K_{n}$ has Pair-LPST between $\mathbf e_{0}\otimes(\mathbf e_{a}^{n}-\mathbf e_{b}^{n})$ and $\mathbf e_{1}\otimes(\mathbf e_{a}^{n}-\mathbf e_{b}^{n})$ at time $\frac{\pi}{2}$.}
\end{example}

\section{Conclusion}
In this paper, we investigate the existence of Pair-LPST in tensor product and double cover of two regular graphs. We give necessary and sufficient conditions for the tensor product of two regular graphs to have Pair-LPST when one of the two regular graphs admits PST or Pair-LPST. We also give necessary and sufficient conditions for the double cover of two regular graphs to have Pair-LPST. By the results, we construct several families of graphs admitting Pair-LPST as examples, revealing the feasibility of constructing graphs having Pair-LPST by tensor products and double covers.

Future work could explore the existence of Pair-LPST in other graph operations, such as the lexicographic product, join, strong product and edge addition.




\begin{thebibliography}{99}
\small{


\bibitem{Ack}
E. Ackelsberg, Z. Brehm, A. Chan, J. Mundinger, et al, Laplacian state transfer in coronas, Linear Algebra Appl. 506 (2016) 154--167.

\bibitem{Alvi}
 R. Alvir, S. Dever, B. Lovitz, J. Myer, et al, Perfect state transfer in Laplacian quantum walk, J. Algebraic Combin. 43 (4) (2016) 801--826.

\bibitem{Banks}
R. J. Banks, D. E. Browne, P. A. Warburton, Rapid quantum approaches for combinatorial optimisation inspired by optimal state-transfer, Quantum 8 (2024) 1253.

\bibitem{BOSE2}
S. Bose, Quantum communication through an unmodulated spin chain, Phys. Rev. Lett. 91 (20) (2003) 207901.

\bibitem{BOSE1}
S. Bose, A. Casaccino, S. Mancini, S. Serverini, Communication in XYZ all-to-all quantum networks with a missinng link, Int. J. Quantum Inf. 7 (4) (2009) 713--723

\bibitem{CAO}
X. Cao, Perfect edge state transfer on cubelike graphs, Quantum Inf. Process. 20 (9) (2021) 285.

\bibitem{CaoCL20}
X. Cao, B. Chen, S. Ling, Perfect state transfer on Cayley graphs over dihedral groups: the non-normal case,  Electron. J. Combin. 27 (2) (2020) \#P2.28.

\bibitem{CaoF21}
X. Cao, K. Feng, Perfect state transfer on Cayley graphs over dihedral groups, Linear Multilinear Algebra  69 (2) (2021) 343--360.

 \bibitem{CAO2}
X. Cao, J. Wan, Perfect edge state transfer on abelian Cayley graphs, Linear Algebra Appl. 653 (2022) 44--65.

\bibitem{QCh18}
Q. Chen, Edge state transfer, Master thesis, University of Waterloo, 2018.

\bibitem{ChG19}
Q. Chen, C. Godsil, Pair state transfer, Quantum Inf. Process. 19 (9) (2020) 321.

\bibitem{Cheung}
W. Cheung, C. Godsil, Perfect state transfer in cubelike graphs, Linear Algebra Appl. 435 (10) (2011) 2468--2474.

\bibitem{chris1}
M. Christandl, N. Datta, A. Ekert, A. Landahl, Perfect state transfer in quantum spin networks, Phys. Rev. Lett. 92 (18) (2004) 187902.

\bibitem{Coutinho14}
G. Coutinho, Quantum State Transfer in Graphs, PhD thesis, University of Waterloo, 2014.


\bibitem{Cohgo16}
G. Coutinho, C. Godsil, Perfect state transfer in products and covers of graphs, Linear Multilinear Algebra 64 (2) (2016) 235--246.

\bibitem{distance}
G. Coutinho, C. Godsil, K. Guo, F. Vanhove, Perfect state transfer on distance-regular graphs and association schemes, Linear Algebra Appl. 478 (2015) 108--130.

\bibitem{Coutinho11}
G. Coutinho, H. Liu, No Laplacian perfect state transfer in trees, SIAM J. Discrete Math. 29 (4) (2015) 2179--2188.

\bibitem{JM}
M. Jiang, X. Liu, J. Wang, Laplacian pair state transfer in Q-graph, Discrete Appl. Math. 375 (2025) 239--258.

\bibitem{Kim}
S. Kim, H. Monterde, B. Ahmadi, A. Chan, et al, A generalization of quantum pair state transfer, Quantum Inf Process 23 369 (2024).

\bibitem{Kirk}
S. Kirkland, S. Severini, Spin-system dynamics and fault detection in threshold networks, Phys. Rev. A 83 (1) (2011) 012310.

\bibitem{LiY2}
 Y. Li, X. Liu, S. Zhang, Laplacian state transfer in Q-graph, Appl. Math. Comput. 384 (2020) 125370.

\bibitem{LiLZZ21}
Y. Li, X. Liu, S. Zhang, S. Zhou, Perfect state transfer in NEPS of complete graphs, Discrete Appl. Math. 289 (2021) 98--114.

\bibitem{LiuD}
D. Liu, X. Liu, No Laplacian perfect state transfer in total graphs, Discrete Math. 346 (9) (2023) 113529.

\bibitem{LCXC21}
G. Luo, X. Cao, G. Xu, Y. Cheng, Cayley graphs of dihedral groups having perfect edge state transfer, Linear Multilinear Algebra  70 (20) (2022) 5957--5972.

\bibitem{Main}
D. Main, P. Drmota, D. P. Nadlinger, E. M. Ainley et al, Distributed quantum computing across an optical network link, Nature 638 (2025) 383--388.

\bibitem{Pet}
M. D. Petkovi\'{c}, M. Ba\v{s}i\'{c}, Further results on the perfect state transfer in integral circulant graphs, Comput. Math. Appl. 61 (2) (2011) 300--312.


\bibitem{Tan19}
Y. Tan, K. Feng, X. Cao, Perfect state transfer on abelian Cayley graphs, Linear Algebra Appl. 563 (2019) 331--352.

\bibitem{Tao}
Y. Tao, W. Wang, Cayley graphs of semi-dihedral groups having perfect edge state transfer, Acta Sci. Natur. Univ. Sunyatseni 64 (4) (2025) 147--155.

\bibitem{Tm19}
I. Thongsomnuk, Y. Meemark, Perfect state transfer in unitary Cayley graphs and gcd-graphs, Linear Multilinear Algebra  67 (1) (2019) 39--50.

\bibitem{WJ}
J. Wang, X. Liu, Laplacian state transfer in edge complemented coronas, Discrete Appl. Math. 293 (2021) 1--14.

\bibitem{liu2023}
W. Wang, X. Liu, J. Wang, Laplacian pair state transfer in vertex coronas, Linear Multilinear Algebra 71 (14) (2023) 2282--2297.

\bibitem{Xu}
J. Xu, F. Mei, Y. Zhu, Spectrum-based shortcut method for topological systems, Phys. Rev. A 110 (2024) 032431.

\bibitem{Yin}
J. Yin, Y. Cao, Y. Li, S. Liao, et al, Satellite-based entanglement distribution over 1200 kilometers, Science 356 (2017) 1140--1144.

\bibitem{ZHU}
X. Zhu, T. Tu, A. Guo, Z. Zhou, et al, Spin-photon module for scalable network architecture in quantum dots, Sci. Rep. 10 (2020) 5063.

}
\end{thebibliography}
\end{document}